\newtheorem{theorem}{Theorem}
\newtheorem{theoremA}{Theorem}
\numberwithin{theorem}{section}
\numberwithin{equation}{section}
\newtheorem{lemma}[theorem]{Lemma}
\newtheorem{proposition}[theorem]{Proposition}
\newtheorem{remark}[theorem]{Remark}
\newcommand{\Z}{\mathbb{Z}}
\newcommand{\pZ}{\widehat{\Z}}
\newcommand{\hD}{\widehat{\Delta}}
\newcommand{\hG}{\widehat{\Gamma}}
\newcommand{\hM}{\widehat{M}}
\newcommand{\hN}{\widehat{N}}
\newcommand{\at}[1]{|_{#1}}
\title{The Profinite Rigidity of Torsion-Free Lamplighter Groups}
\author{Nikolay Nikolov and Julian Wykowski}
\address{Mathematical Institute, University of Oxford, Andrew Wiles Building, Radcliffe Observatory Quarter, Woodstock Road, Oxford OX2 6GG, United Kingdom}
\email{nikolay.nikolov@maths.ox.ac.uk}
\address{Department of Pure Mathematics and Mathematical Statistics, Centre for Mathematical Sciences, Wilberforce Road, Cambridge CB3 0WB, United Kingdom}
\email{jw2006@cam.ac.uk}
\date{\today}
\begin{document}
\begin{abstract}
We prove that the torsion-free lamplighter group $\Gamma = \Z^n \wr \Z$ of any rank $n \in \mathbb{N}$ is profinitely rigid in the absolute sense: the finite quotients of $\Gamma$ determine its isomorphism type uniquely among all finitely generated residually finite groups. The proof combines the theory of profinite rigidity for modules over Noetherian domains with an analysis of the algebraic properties of the lower central series of groups with the same profinite completion as $\Gamma$.
\end{abstract}
\maketitle
\section{Introduction}

A central theme in group theory over the last half-century has been the study of \emph{profinite rigidity}: can the structure of an infinite group be detected in its finite images? Let $\Gamma$ be a finitely generated residually finite group and write $\mathcal{C}(\Gamma)$ to denote the set of isomorphism classes of finite epimorphic images of $\Gamma$. We say that $\Gamma$ is \emph{profinitely rigid in the absolute sense} if for any finitely generated residually finite group $\Delta$ satisfying $\mathcal{C}(\Delta) = \mathcal{C}(\Gamma)$, there exists an isomorphism $\Delta \cong \Gamma$. This is equivalent to the isomorphism type of $\Gamma$ being distinguished by its profinite completion $\hG = \varprojlim{\mathcal{C}(\Gamma)}$ among all finitely generated residually finite groups \cite{Dixon1982}.

Originating in the work of Grothendieck on arithmetic geometry, profinite rigidity has been studied in contexts ranging from geometric topology to representation theory and computational algebra \cite{Grothendieck1970, Bridson2004, Bridson2015, Wilton2019}. Nonetheless, the question which groups are profinitely rigid in the absolute sense remains a vast mystery for most groups, the salient obstruction being that even if one possesses a lot of information about a group $\Gamma$, one cannot in general exclude the possibility that some unknown exotic group $\Delta$ could have the same finite images as $\Gamma$. In fact, for free and free solvable groups, the question of profinite rigidity is a long-standing open problem attributed to Remeslennikov \cite[Question 15]{Noskov1982}.

Recently, the second author developed a theory of profinite rigidity for modules over Noetherian domains \cite{Wykowski2025_2}, which was then harnessed to establish profinite rigidity for two infinite classes of metabelian groups, namely, solvable Baumslag--Solitar groups \cite{Wykowski2025_2} and free metabelian groups \cite{Wykowski2025_3}. The purpose of this article is to expand this theory further to establish the absolute profinite rigidity of \emph{torsion-free lamplighter groups}, i.e. the groups
\begin{equation} \label{Eq::DefOfLL}
\Gamma_n = \Z^n \wr \Z \cong M \rtimes \langle t\rangle
\end{equation}
where $n \in \mathbb{N}$ and $M$ is the free $\Z[\langle t\rangle]$-module of rank $n$. We answer the question of absolute profinite rigidity positively for this class of groups. Torsion-free lamplighter groups occupy a central position in geometric group theory and dynamics: see \cite{Genevois2024} for a survey. The group $\Gamma_1 = \Z \wr \Z$ is particularly important in the study of groups acting homeomorphically on 1-manifolds \cite{Cannon1996,Balasubramanya2024}.
\begin{restatable}{theoremA}{MainThm}
    \label{Thm::MainThm}
    Let $n \in \mathbb{N}$ be a positive integer. The torsion-free lamplighter group $\Gamma_n = \Z^n \wr \Z$ is profinitely rigid in the absolute sense.
\end{restatable}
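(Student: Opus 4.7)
Let $\Delta$ be any finitely generated residually finite group with $\hD \cong \hG_n$, and write $\Lambda = \Z[t^{\pm 1}]$ so that $\Gamma_n = M \rtimes \langle t \rangle$ with $M \cong \Lambda^n$. The plan is to reconstruct this semidirect decomposition of $\Delta$ in three stages: (i) establish that $\Delta$ is metabelian with $\Delta^{\mathrm{ab}} \cong \Z^{n+1}$; (ii) identify a canonical $\Lambda$-module structure on $\Delta'$ via the lower central series; and (iii) apply the rigidity theorem from \cite{Wykowski2025_2} to pin down $\Delta' \cong \Lambda^n$ and exhibit the splitting.

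Stage (i) follows standard lines. Every finite quotient of $\Delta$ is a finite quotient of $\Gamma_n$ and hence metabelian, so residual finiteness forces $\Delta''=1$. The abelianization $\Delta^{\mathrm{ab}}$ is a finitely generated abelian group whose profinite completion is $\pZ^{n+1}$, hence $\Delta^{\mathrm{ab}} \cong \Z^{n+1}$, and by P.~Hall's theorem $\Delta'$ is finitely generated as a $\Z[\Delta^{\mathrm{ab}}]$-module. For stage (ii), one first computes $\gamma_k(\Gamma_n) = (1-t)^{k-1}M$ for all $k \geq 1$, so each lower central quotient is isomorphic to $\Z^n$, and the bracket pairing $\Gamma_n^{\mathrm{ab}} \otimes \gamma_k/\gamma_{k+1} \to \gamma_{k+1}/\gamma_{k+2}$ vanishes on the $M$-direction of $\Gamma_n^{\mathrm{ab}}$ but is an isomorphism on the $t$-direction. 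Transporting this bracket structure to $\Delta$ via $\hD \cong \hG_n$ isolates a distinguished quotient $\tau \colon \Delta \twoheadrightarrow \Z$ whose kernel consists of those elements whose bracket action on the associated graded Lie algebra of $\Delta$ is trivial. A lift $s \in \Delta$ with $\tau(s) = 1$ then equips $\Delta'$ with a canonical $\Lambda$-module structure whose profinite completion coincides with $\hM$.

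For stage (iii) I would invoke \cite{Wykowski2025_2}: since $\Lambda$ is a Noetherian domain and $M$ is free, every finitely generated $\Lambda$-module with the same profinite completion as $M$ is isomorphic to $M$, giving $\Delta' \cong \Lambda^n$. To promote this to a group isomorphism, one compares the commutator pairing $\wedge^2(\ker(\tau)/\Delta') \to \Delta'$ with its profinite counterpart: the latter vanishes because $M$ is abelian in $\Gamma_n$, and since $\Delta' \cong \Lambda^n$ is torsion-free the injection $\Delta' \hookrightarrow \hM$ forces the abstract pairing to vanish as well. Thus $\ker(\tau)$ is an abelian extension of $\Z^n$ by $\Lambda^n$, and the same profinite comparison forces this extension to be trivial, so $\ker(\tau) \cong \Lambda^n$. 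A final application of $H^2(\Z, \Lambda^n) = 0$ (by Shapiro's lemma, as $\Lambda^n$ is coinduced from the trivial subgroup of $\Z$) yields the splitting $\Delta \cong \Lambda^n \rtimes \Z = \Gamma_n$.

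The chief obstacle I anticipate is stage (ii): showing that the bracket-kernel of $\Delta^{\mathrm{ab}}$ on the associated graded Lie algebra is precisely the desired rank-$n$ subgroup, so that $\tau$ emerges as an honest integer-valued surjection rather than a cyclic subgroup of $\pZ$ of infinite index. This requires a careful matching of the abstract lower central series of $\Delta$ with the topological lower central series of $\hD = \hG_n$, in particular controlling the embedding $\Delta^{\mathrm{ab}} \hookrightarrow \widehat{\Delta^{\mathrm{ab}}}$ of integer lattices; it is the success of this matching that makes the Noetherian module rigidity from \cite{Wykowski2025_2} applicable and that feeds into the cohomological arguments of stage (iii).
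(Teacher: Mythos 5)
Your skeleton matches the paper's strategy — locate a canonical surjection of $\Delta$ onto $\Z$, make the complementary subgroup a $\Z[x^{\pm 1}]$-module, invoke the module rigidity of \cite{Wykowski2025_2} together with Quillen--Suslin, and reassemble the split extension — but the two steps you leave open are exactly where the paper's work lies, and neither is routine. For your stage (ii), knowing the lower central quotients of $\Delta$ and an isomorphism $\hD \cong \hG$ does not by itself show that the bracket-kernel has quotient $\Z$: the profinite isomorphism gives no control over the integral lattice $\Delta^{\mathrm{ab}} \subset \widehat{\Delta^{\mathrm{ab}}}$ (a finitely generated subgroup of $\pZ$ need not even be cyclic), and this is precisely the obstacle you name without resolving. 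The paper closes it by proving that the image $L_m$ of $\Delta$ in $\operatorname{Aut}\bigl(\gamma_2(\Delta)/\gamma_m(\Delta)\bigr)$ is infinite cyclic: every finite quotient of $L_m$ is cyclic by a twisted-module comparison of $\overline{[\Delta,\Delta]}$ with $\overline{[\Gamma,\Gamma]}$ (Lemma~\ref{cyclic}), every finite quotient of $L_m$ is a congruence quotient because $L_m$ lies in a unitriangular group, profinite rigidity of finitely generated abelian groups then makes $L_m$ cyclic, and the quotients $Q_p$ force it to be infinite (Proposition~\ref{nilq}). Nothing in your sketch substitutes for this chain.

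Even granting stage (ii), your stage (iii) has a second gap you do not flag: the kernel $K=\ker\tau$ is then only known to act trivially on $[\Delta,\Delta]$ modulo each $\gamma_m(\Delta)$, and since $\Delta$ is not known to be residually nilpotent this does not yield that $K$ centralises $[\Delta,\Delta]$, nor that $K$ is abelian; this upgrade is the content of Section~\ref{Sec::Lifting} (Lemmas~\ref{centralizer} and~\ref{Lem::Abelian}) and uses a nontrivial element-order and unipotence argument together with the cofinal quotients $C_m^n \wr C_m$. Without it, $[\Delta,\Delta]$ need not even be finitely generated as a $\Z[s^{\pm 1}]$-module, so Theorem~\ref{Thm::NoetherianDomains} cannot be quoted; and your vanishing argument for the commutator pairing on $K$ presupposes that $\phi$ carries $\overline{K}$ into the abelian subgroup $\overline{M} \leq \hG$, which is itself a lemma (Lemma~\ref{Lem::ImageOfAbelianSubgroup}) whose proof relies on the global centralising property just mentioned. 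Finally, when you compare $\widehat{[\Delta,\Delta]}$ (or $\hN$) with $\hM$, the map induced by $\phi$ is a priori only a twisted isomorphism of $\widehat{\Lambda}$-modules — the two module structures are intertwined up to the automorphism of $\widehat{\Lambda}$ induced by $\widetilde{\phi}$ on $\pZ$ — and the twist must be absorbed (the paper does this using freeness of $\hM$) before module rigidity applies. So the proposal identifies the correct architecture but omits the cyclicity and centralisation arguments that constitute Sections~\ref{Sec::Nilpotent} and~\ref{Sec::Lifting} of the paper, and these are the heart of the proof rather than technical afterthoughts.
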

We note that Theorem~\ref{Thm::MainThm} contrasts with \cite{Pickel1974}, where metabelian groups with infinite profinite genera were constructed, and \cite{Nikolov2021}, where four-step solvable groups with uncountable profinite genera were found. On the other hand, for finitely generated lamplighter groups with base of prime exponent, i.e. groups of the form $\mathbb{F}_p^k \wr \Z$ for some $k \in \mathbb{N}$ and prime number $p$, the analogous question was answered positively in \cite{Blachar2025}.

The proof is outlined as follows. Using the decomposition (\ref{Eq::DefOfLL}), one aims to reduce the problem to the question of profinite rigidity of $M$ within the category of $\Z[\langle t \rangle]$-modules, which one can then solve using the theory of profinite rigidity for modules over Noetherian domains \cite{Wykowski2025_2}. However, the difficulty lies in the fact that $M$ is strictly larger than the derived subgroup of $\Gamma$, so one must first find a sensible counterpart $N$ to the module $M$ among the subgroups of an arbitrary finitely generated residually finite group $\Delta$ with $\hD \cong \hG$. Having explored some preliminary results in Section~\ref{Sec::Prelims}, the subgroup $N \leq \Delta$ is constructed in Section~\ref{Sec::Nilpotent} relative to the infinite family of maximal nilpotent quotients of $\Delta$. An essential ingredient at this step is a theorem of the first author on the closure of verbal subgroups in profinite group \cite[Theorem 1.4]{Nikolov2007}. In Section~\ref{Sec::Lifting}, the defining relative properties of $N$ are then upgraded to global characteristic properties. Finally, in Section~\ref{Sec::Final}, the proof of Theorem~\ref{Thm::MainThm} is completed.

\section*{Acknowledgements}

The authors are grateful to Gareth Wilkes for fruitful discussions and insight. The authors would also like to thank the Isaac Newton Institute for Mathematical Sciences, Cambridge, for support and hospitality during the programme Discrete and Profinite Groups, where this collaboration was initiated. This programme was supported by EPSRC grant EP/Z000580/1. In addition, the second author is grateful for the financial support of a Cambridge International Trust \& King's College PhD Scholarship at the University of Cambridge.

\section{Preliminary Results}\label{Sec::Prelims}
In this section, we survey and prove certain preliminary results that shall be utilised in the proof of Theorem~\ref{Thm::MainThm}. This includes results from the theory of profinite rigidity over Noetherian domains, nilpotent quotients of finitely generated profinite groups, and a brief lemma regarding orders of elements in finite product groups.
\subsection{Profinite Rigidity over Noetherian Domains}
The first main ingredient in the proof of Theorem~\ref{Thm::MainThm} will be the theory of profinite rigidity for modules over Noetherian domains developed by the second author in \cite{Wykowski2025_2}. Let $\Lambda$ be a finitely generated commutative domain, that is, a quotient of the polynomial ring $\Z[x_1, \ldots, x_n]$ for some $n \in \mathbb{N}$ by a prime ideal. Such a domain $\Lambda$ is automatically Noetherian by the Hilbert Basis Theorem, and all finitely generated $\Lambda$-modules are automatically residually finite \cite[Theorem 1]{Orzech1970}. We say that $\Lambda$ is \emph{homologically taut} if all finitely generated projective $\Lambda$-modules are free.

Given a $\Lambda$-module $M$, we define the \emph{profinite completion} $\hM$ of $M$ as the profinite $\Lambda$-module given by the inverse limit
\[
\hM = \varprojlim_{N \trianglelefteq_f M} (M/N)
\]
in the category of $\Lambda$-modules, where $N\trianglelefteq_f M$ ranges through the finite-index submodules of $M$. A finitely generated $\Lambda$-module $M$ is then said to be \emph{$\Lambda$-profinitely rigid} if for any finitely generated $\Lambda$-module $N$, the equivalence $\hM \cong \hN \Longleftrightarrow M \cong N$ holds. We note the following profinite rigidity result for free modules over homologically taut Noetherian domains.
\begin{theorem}[Theorem C in \cite{Wykowski2025_2}]\label{Thm::NoetherianDomains}
    Let $\Lambda$ be a finitely generated Noetherian domain. If $\Lambda$ is homologically taut then all finitely generated free $\Lambda$-modules are $\Lambda$-profinitely rigid. 
\end{theorem}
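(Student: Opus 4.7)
The plan is to reduce the profinite rigidity of $\Lambda^n$ to the assertion that any finitely generated $\Lambda$-module $N$ with $\widehat{N} \cong \widehat{\Lambda^n}$ must be a finitely generated projective $\Lambda$-module of the correct rank; the homological tautness hypothesis then upgrades projectivity to freeness, yielding $N \cong \Lambda^n$. The hypothesis of tautness is indispensable here: over a Dedekind domain $\Lambda$ with nontrivial class group, any non-principal ideal $I$ is projective and locally principal at every prime, whence $\widehat{I} \cong \widehat{\Lambda}$ as profinite $\widehat{\Lambda}$-modules, providing a counterexample to profinite rigidity in the non-taut setting.

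For the projectivity step, I would use the identification $\widehat{\Ext^1_\Lambda(N, F)} \cong \Ext^1_{\widehat{\Lambda}}(\widehat{N}, \widehat{F})$ for finitely generated $\Lambda$-modules $F$, which is available once profinite completion is known to be exact on finitely generated $\Lambda$-modules (equivalently, once $\widehat{\Lambda}$ is $\Lambda$-flat). Since $\widehat{N} \cong \widehat{\Lambda^n} \cong \widehat{\Lambda}^n$ is a free $\widehat{\Lambda}$-module, the right-hand Ext-group vanishes. The residual finiteness of the finitely generated Noetherian module $\Ext^1_\Lambda(N, F)$, via Orzech's theorem, then forces the vanishing $\Ext^1_\Lambda(N, F) = 0$ already over $\Lambda$. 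Letting $F$ range over all finitely generated $\Lambda$-modules, we conclude that $N$ is projective.

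By homological tautness, $N$ is free of some rank $r$. The rank is pinned down by comparing mod-$\mathfrak{m}$ reductions: every maximal ideal $\mathfrak{m} \trianglelefteq \Lambda$ of the finitely generated $\Z$-algebra $\Lambda$ is of finite index, and $\widehat{N} \cong \widehat{\Lambda^n}$ implies $N/\mathfrak{m}N \cong (\Lambda/\mathfrak{m})^n$; combined with $N \cong \Lambda^r$ this yields $(\Lambda/\mathfrak{m})^r \cong (\Lambda/\mathfrak{m})^n$, hence $r = n$, and we are done.

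The main obstacle is establishing the technical apparatus comparing $\Lambda$-theoretic and $\widehat{\Lambda}$-theoretic homological algebra, in particular the flatness of $\widehat{\Lambda}$ over $\Lambda$, the identification $\widehat{M} \cong M \otimes_\Lambda \widehat{\Lambda}$ for finitely generated $M$, and the resulting comparison of Ext-groups. These properties are plausible for finitely generated Noetherian domains whose finitely generated modules are residually finite, but their careful verification in this generality is presumably the most delicate ingredient of the framework developed in \cite{Wykowski2025_2}, to which the proof would ultimately appeal.
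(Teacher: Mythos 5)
First, a point of comparison you could not have known: the paper does not prove Theorem~\ref{Thm::NoetherianDomains} at all --- it is imported verbatim as Theorem~C of \cite{Wykowski2025_2} --- so there is no in-paper proof to measure your proposal against, and I can only judge the outline on its own terms. As an outline it is sound and almost certainly close in spirit to the intended argument: show that a finitely generated $N$ with $\hN \cong \widehat{\Lambda^n}$ is projective, use homological tautness to conclude $N \cong \Lambda^r$, and pin down $r=n$ by reducing modulo a maximal ideal, all of which have finite residue field since $\Lambda$ is a finitely generated $\Z$-algebra. Your projectivity step is correctly structured: with $0 \to K \to \Lambda^m \to N \to 0$ it suffices to kill $\Ext^1_\Lambda(N,K)$, and the combination of flat base change $\Ext^1_{\widehat{\Lambda}}(\hN,\widehat{K}) \cong \Ext^1_\Lambda(N,K)\otimes_\Lambda\widehat{\Lambda}$, freeness of $\hN$ over $\widehat{\Lambda}$, and Orzech's residual finiteness of the finitely generated module $\Ext^1_\Lambda(N,K)$ does force its vanishing. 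The ingredients you defer are true and not circular to assume: every finite-index ideal of $\Lambda$ contains a product of powers of finite-index maximal ideals, so $\widehat{\Lambda} \cong \prod_{\mathfrak{m}}\varprojlim_k \Lambda/\mathfrak{m}^k$; each factor is flat, products of flat modules over a Noetherian (hence coherent) ring are flat, and tensoring a finitely presented module commutes with products, which yields both the flatness of $\widehat{\Lambda}$ and the identification $\hM \cong M\otimes_\Lambda\widehat{\Lambda}$ that your Ext comparison needs. Two minor imprecisions --- exactness of completion on finitely generated modules is not literally equivalent to flatness of $\widehat{\Lambda}$ without that tensor identification, and the phrase ``letting $F$ range over all finitely generated modules'' is unnecessary since only $F=K$ is needed --- do not affect correctness. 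Whether the actual proof in \cite{Wykowski2025_2} takes this route or a different one (e.g.\ a genus-theoretic or local--global argument) cannot be determined from the present paper.
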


We shall use this theorem in conjunction with the following celebrated result of Quillen and Suslin, which we cite here in localised form.
\begin{theorem}[Corollary 7.4 in \cite{Suslin1977}]\label{Thm::Quillen-Suslin}
    The ring of integral Laurent polynomials $\Lambda = \Z[x^{\pm}_1, \ldots, x^{\pm}_n]$ in $n$ variables is homologically taut for any $n \in \mathbb{N}$.
\end{theorem}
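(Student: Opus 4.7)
The plan is to proceed by induction on $n$, reducing the Laurent polynomial case to successively simpler coefficient rings. The base case $n = 0$ yields $\Lambda = \Z$, a principal ideal domain, over which the structure theorem for finitely generated modules immediately implies that every finitely generated projective module is free.

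For the inductive step, set $R = \Z[x_1^{\pm}, \ldots, x_{n-1}^{\pm}]$ and assume the hypothesis that every finitely generated projective $R$-module is free. Writing $\Lambda = R[x_n^{\pm}]$, I would aim to show that any finitely generated projective $\Lambda$-module $P$ is \emph{extended from $R$}, meaning $P \cong P_0 \otimes_R \Lambda$ for some finitely generated projective $R$-module $P_0$. Once extension is established, $P_0$ is free by the inductive hypothesis, whence $P$ is free. To produce such a $P_0$, I would invoke the Quillen--Suslin patching machinery: the scheme $\mathrm{Spec}\,\Lambda$ is covered by the two affine open sets $\mathrm{Spec}\, R[x_n]$ and $\mathrm{Spec}\, R[x_n^{-1}]$, and Horrocks' theorem can be applied to show that the restriction of $P$ to each chart is extended from $R$, followed by a gluing step over the intersection $R[x_n^{\pm}]$.

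An alternative, more K-theoretic route is to combine the Bass--Quillen computation $K_0(R[t^{\pm}]) \cong K_0(R)$ for regular Noetherian $R$ with the triviality $K_0(\Z) \cong \Z$ to conclude that every finitely generated projective $\Lambda$-module is stably free, and then appeal to Suslin's $n$-stability results to upgrade stable freeness to freeness in sufficient rank. The first approach has the advantage of being more geometric and producing the module $P_0$ explicitly; the second is cleaner to state but outsources more to heavy K-theoretic input.

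The principal obstacle on either path is the substantive content of the Quillen--Suslin theorem itself: Horrocks' theorem and Quillen's patching argument, or equivalently the computation of $K_0$ of Laurent polynomial rings and Suslin's stability theorem, both require intricate manipulations of unimodular rows and a careful local-to-global analysis of projective modules over polynomial rings. The inductive scaffolding and the reduction to the coefficient ring are formal once this core technology is in hand; it is this core technology that one cannot avoid.
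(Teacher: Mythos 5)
The paper does not prove this statement at all: it is quoted verbatim as an external result (Suslin's Corollary 7.4, i.e.\ the Laurent-polynomial form of the Quillen--Suslin theorem, due to Suslin and Swan), so there is no internal proof to compare against. Your proposal is a roadmap for re-proving that cited theorem, and you candidly defer the entire substantive content (Horrocks' theorem, Quillen patching, stability/cancellation) to the literature, so as it stands it is an outline rather than a proof; for the purposes of this paper the correct move is simply the citation the authors make.

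Two points in your outline would need repair if you did want to execute it. First, the geometric picture is inverted: $\operatorname{Spec} R[x_n]$ and $\operatorname{Spec} R[x_n^{-1}]$ are not open subsets covering $\operatorname{Spec} \Lambda$; rather, they cover $\mathbb{P}^1_R$ and their intersection is $\operatorname{Spec} R[x_n^{\pm}]$, so $\operatorname{Spec}\Lambda$ sits inside each chart as the locus $x_n \neq 0$. Consequently one cannot just ``restrict $P$ to each chart''; the Laurent case needs its own Horrocks-type extension theorem (this is exactly what Suslin and Swan supply), and the passage from the polynomial case to the Laurent case is not the formal gluing step you describe. Second, in the K-theoretic route, $K_0(R[t^{\pm}]) \cong K_0(R)$ is the Bass--Heller--Swan theorem together with regularity of $R$ (to kill the $NK_0$ and $K_{-1}$ contributions), and stable freeness only upgrades to freeness via cancellation when the rank exceeds the Krull dimension of $\Lambda$; for projectives of small rank (the case $d \leq n+1$ here) this is precisely the hard part of the theorem and is not covered by ``$n$-stability in sufficient rank''. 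So the second route, as sketched, has a genuine gap in low rank, and the first route conflates the polynomial-ring argument with the Laurent-ring argument.
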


\subsection{The Profinite Invariance of Nilpotent Quotients}
Given a discrete or profinite group $G$, we shall write $\gamma_n(G)$ for the $n$-th term in its lower central series, that is
\[
\gamma_n(G) = [G,\gamma_{n-1}(G)]
\]
when $n \geq 2$ and $\gamma_1(G) = G$. In the case where $G$ is a profinite group, the commutator subgroup denotes the abstract subgroup generated by all commutators of the given form, and so $\gamma_n(G)$ is not a priori a closed subgroup of $G$. However, for finitely generated profinite groups, it does indeed turn out to be a closed subgroup, as follows from a deep theorem due to the first author and Dan Segal.
\begin{theorem}[Theorem 1.4 in \cite{Nikolov2007}]\label{Thm::NS}
    Let $G$ be a finitely generated profinite group. For any $k \in \mathbb{N}$, the abstract subgroup $\gamma_k(G)$ is closed in $G$.
\end{theorem}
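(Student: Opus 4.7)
The plan is to reduce the closedness of $\gamma_k(G)$ to a quantitative ``bounded commutator width'' statement for finite groups, and then prove that statement by reduction to finite simple groups via the classification. Specifically, I would aim to show the following uniform bound: there exists $N = N(d, k) \in \mathbb{N}$ such that in every $d$-generator finite group $Q$, every element of $\gamma_k(Q)$ is expressible as a product of at most $N$ left-normed commutators $[g_1, g_2, \ldots, g_k]$ and their inverses. Granted this, closedness of $\gamma_k(G)$ is immediate: if $d$ is the topological rank of $G$, the map $\phi \colon G^{kN} \times \{-1, +1\}^N \to G$ sending tuples to the corresponding signed commutator product is continuous with compact domain, and a standard inverse-limit argument over the continuous finite quotients $G/U$ of $G$ identifies the image of $\phi$ with $\gamma_k(G)$, so $\gamma_k(G)$ is the continuous image of a compact space and hence closed.

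To establish the uniform bound $N(d, k)$ for finite $d$-generator groups, I would induct on $|G|$ along a chief series $1 = G_0 \trianglelefteq G_1 \trianglelefteq \cdots \trianglelefteq G_m = G$. For abelian chief factors $G_i / G_{i-1}$, the width bound reduces to linear algebra over a finite field, yielding a bound polynomial in $d$. For non-abelian chief factors, which are direct products of isomorphic non-abelian finite simple groups permuted transitively by $G$, the key input is Ore's conjecture (Liebeck--O'Brien--Shalev--Tiep): every element of a non-abelian finite simple group is a single commutator. This handles a single simple factor, but one needs a stronger ``uniform'' version that controls products of many such factors under the conjugation action of $G$. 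The inductive step then consists of combining width bounds in $M \trianglelefteq G$ and in $G/M$ to extract a width bound in $G$ itself.

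The main obstacle is precisely this combination step. Lifting a commutator identity from $G/M$ back to $G$ introduces $M$-valued correction terms whose own width one must also control, and when $M$ is a large direct product of non-abelian simple factors acted on by $G$, the correction terms need not be commutators in $M$ in any obvious way. Overcoming this is the heart of the Nikolov--Segal technique and is typically handled via ``twisted commutators'' $T_k(a_1, \ldots, a_k; x_1, \ldots, x_{k-1})$ engineered so that their values behave predictably under direct products and extensions, together with fine bounded-generation results for finite simple groups of Lie type; packaging all of this into a single clean inductive invariant is where the substantial technical work lies.
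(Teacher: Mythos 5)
The statement you are asked about is not proved in this paper at all: it is quoted verbatim as Theorem~1.4 of \cite{Nikolov2007} and used as a black box (its only role here is to justify $\overline{\gamma_n(\Gamma)}=\gamma_n(\hG)$ in Proposition~\ref{Prop::ProfiniteInvarianceOfCentralSeries}). So there is no in-paper argument to compare yours against; the only fair comparison is with the Nikolov--Segal proof itself, and against that benchmark your proposal is an accurate road map but not a proof. Your reduction step is correct and is indeed how the cited work proceeds: if there is a uniform bound $N(d,k)$ on the number of signed left-normed weight-$k$ commutators needed to express every element of $\gamma_k(Q)$ in every $d$-generator finite group $Q$, then the image of the compact set $G^{kN}\times\{\pm1\}^N$ under the continuous product-of-commutators map is a closed subset of $G$ which, by an inverse-limit argument over the open normal subgroups, coincides with the abstract subgroup $\gamma_k(G)$, hence $\gamma_k(G)$ is closed.

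The genuine gap is that the uniform bound $N(d,k)$ \emph{is} the theorem: everything after your first paragraph is a statement of intent, not an argument, and you say so yourself (``packaging all of this into a single clean inductive invariant is where the substantial technical work lies''). Concretely, the induction along a chief series does not close as described: when you lift a width bound from $G/M$ to $G$ with $M$ a product of non-abelian simple groups permuted by $G$, the correction terms lie in $M$ but need not be products of few $\gamma_k$-values with entries you control, and Ore's conjecture (one simple group, one commutator, arbitrary entries) is far too weak --- what is needed is bounded expressibility of elements of such an $M$ by commutators whose entries come from prescribed generating sets of $G$ acting on $M$, uniformly over the number of simple factors. This is exactly what the Nikolov--Segal machinery of twisted commutators, ``acceptable'' subgroups, and CFSG-based bounded-generation results for (quasi)simple groups supplies, and none of it is reconstructed in your sketch. (Two smaller points: the abelian-chief-factor case is not pure linear algebra either --- one needs that $\gamma_k$ of a $d$-generator group is generated modulo $\gamma_{k+1}$ by boundedly many commutators in the generators, a Hall-basis type fact; and historically Nikolov--Segal could not and did not use Ore's conjecture, which was proved later, so your appeal to it signals that you are describing the landscape rather than deriving the bound.) As it stands, the proposal correctly reduces the theorem to the deep quantitative statement of \cite{Nikolov2007} and stops there, so it cannot serve as a proof of Theorem~\ref{Thm::NS}; for the purposes of this paper the honest course is what the authors do, namely cite \cite{Nikolov2007}.
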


A consequence of Theorem~\ref{Thm::NS} is the following proposition, which shall prove essential in the construction of the characteristic subgroup in  Section~\ref{Sec::Nilpotent}.

\begin{proposition}\label{Prop::ProfiniteInvarianceOfCentralSeries}
    Let $\Gamma, \Delta$ be finitely generated residually finite groups. If there is an isomorphism of profinite completions $\hG \cong \hD$ then there is an isomorphism of lower central factors $\gamma_n(\Gamma)/\gamma_{n+1}(\Gamma) \cong \gamma_n(\Delta) / \gamma_{n+1}(\Delta)$ for all $n \in \mathbb{N}$. 
\end{proposition}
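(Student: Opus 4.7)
The plan is to identify the profinite group $\gamma_n(\hG)/\gamma_{n+1}(\hG)$ with the profinite completion of the finitely generated abelian group $A_\Gamma := \gamma_n(\Gamma)/\gamma_{n+1}(\Gamma)$, and symmetrically for $\Delta$. The given isomorphism $\hG \cong \hD$ then descends to an isomorphism $\widehat{A_\Gamma} \cong \widehat{A_\Delta}$, and since finitely generated abelian groups are determined up to isomorphism by their profinite completions via the structure theorem, one concludes $A_\Gamma \cong A_\Delta$.

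The first step is to verify that $\overline{\gamma_n(\Gamma)} = \gamma_n(\hG)$ inside $\hG$. The inclusion $\overline{\gamma_n(\Gamma)} \subseteq \gamma_n(\hG)$ is immediate from Theorem~\ref{Thm::NS}, since the right-hand side contains $\gamma_n(\Gamma)$ and is closed. For the reverse, density of $\Gamma$ in $\hG$ together with continuity of the commutator map ensures that every $n$-fold commutator in $\hG$ lies in $\overline{\gamma_n(\Gamma)}$; since this closure is a subgroup, it contains the abstract group these commutators generate, namely $\gamma_n(\hG)$. In particular, the profinite completion of the finitely generated nilpotent quotient $\Gamma/\gamma_{n+1}(\Gamma)$ is naturally identified with $\hG/\gamma_{n+1}(\hG)$.

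The second step is to show that the closure of the image of $A_\Gamma$ in $\hG/\gamma_{n+1}(\hG)$, which is precisely $\gamma_n(\hG)/\gamma_{n+1}(\hG)$, is itself isomorphic to $\widehat{A_\Gamma}$. For this I would prove that every finite-index subgroup $H \leq A_\Gamma$ arises as $K \cap A_\Gamma$ for some finite-index subgroup $K$ of $G := \Gamma/\gamma_{n+1}(\Gamma)$. Since $A_\Gamma$ is central in $G$, the subgroup $H$ is normal in $G$, and the quotient $G/H$ is finitely generated nilpotent, hence residually finite; separating each nontrivial element of the finite central subgroup $A_\Gamma/H$ from the identity by a finite-index subgroup of $G/H$ and intersecting over this finite set yields a finite-index subgroup of $G/H$ whose preimage in $G$ is the desired $K$.

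The main technical point is the opening identification $\overline{\gamma_n(\Gamma)} = \gamma_n(\hG)$, for which Theorem~\ref{Thm::NS} is indispensable: without closedness one could not even guarantee that $\hG/\gamma_{n+1}(\hG)$ is Hausdorff, let alone profinite and equal to the completion of $\Gamma/\gamma_{n+1}(\Gamma)$. Once the two steps above are in place, applying them symmetrically to $\Delta$ and transporting the isomorphism $\hG \cong \hD$ through the construction delivers the required isomorphism of lower central factors.
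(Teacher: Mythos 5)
Your proposal is correct and follows essentially the same route as the paper: use Theorem~\ref{Thm::NS} to identify $\overline{\gamma_n(\Gamma)}$ with $\gamma_n(\hG)$, identify $\gamma_n(\hG)/\gamma_{n+1}(\hG)$ with the profinite completion of the finitely generated abelian group $\gamma_n(\Gamma)/\gamma_{n+1}(\Gamma)$, transport through $\hG \cong \hD$, and finish with profinite rigidity of finitely generated abelian groups. The only divergence is in the middle step, where the paper cites the fact that finitely generated nilpotent groups induce the full profinite topology on all their subgroups together with exactness of inverse limits, whereas you prove the needed special case directly for the central subgroup $\gamma_n(\Gamma)/\gamma_{n+1}(\Gamma)$ of $\Gamma/\gamma_{n+1}(\Gamma)$ via residual finiteness of finitely generated nilpotent quotients --- a harmless, self-contained substitution.
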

\begin{proof}
   We claim first that for any finitely generated residually finite group $\Gamma$, the closure $\overline{\gamma_n(\Gamma)}$ in the profinite completion $\hG$ satisfies \begin{equation}\label{Eq::gamma_closure}
       \overline{\gamma_n(\Gamma)} = \gamma_n(\hG)
   \end{equation} for all $n \in \mathbb{N}$. Indeed, for any discrete or profinite group $G$, the $n$-th lower central term $\gamma_n(G)$ is equal to the verbal subgroup in $G$ associated to the word $w_n \colon G^n \to G $ given by the simple commutator $w_n(x_1, \ldots, x_n) = [x_1, \ldots, x_n]$. The inclusion $\overline{\gamma_n(\Gamma)} \subseteq \gamma_n(\hG)$ holds as $\gamma_n(\hG)$ is closed by Theorem~\ref{Thm::NS}. Conversely, as $\Gamma$ is dense in $\hG$, the image $w_n(\Gamma^n)$ under the continuous map $w_n$ is dense in the closed set $\gamma_n(\hG) = w_n(\hG^n)$. Hence the subgroup $\gamma_n(\Gamma) = \langle w_n(\Gamma^n)\rangle$ is dense in the closed subgroup $\gamma_n(\hG) = \langle w_n(\hG) \rangle$, whence $\overline{\gamma_n(\Gamma)} = \gamma_n(\hG)$ holds. This proves the claim.

   Now, write $\overline{\gamma_n(\Gamma)},\overline{\gamma_{n+1}(\Gamma)}$ for the closures of $\gamma_n(\Gamma),\gamma_{n+1}(\Gamma)$ within the profinite completion $\hG$, respectively, and write $\overline{\gamma_n(\Gamma)/\gamma_{n+1}(\Gamma)}$ for the closure of the quotient $\gamma_n(\Gamma)/\gamma_{n+1}(\Gamma)$ within the profinite completion $\widehat{\Gamma_{n+1}}$ of the nilpotent group $\Gamma_{n+1} = \Gamma / \gamma_{n+1}(\Gamma)$. Using the third isomorphism theorem and the fact that the inverse limit functor is exact \cite[Proposition 2.2.4]{Ribes2010}, we find that
\begin{align}\label{Eq::SecondIsomorphismThm}
\overline{\gamma_n(\Gamma)}/\overline{\gamma_{n+1}(\Gamma)} &\cong \varprojlim_{ U\trianglelefteq_f \Gamma} \left(\frac{\gamma_n(\Gamma)/U \cap \gamma_n(\Gamma)}{\gamma_{n+1}(\Gamma)/ U \cap  \gamma_{n+1}(\Gamma)}\right) \\
&\cong \varprojlim_{U\trianglelefteq_f \Gamma} \left(\frac{\gamma_n(\Gamma)\cdot U}{\gamma_{n+1}(\Gamma) \cdot  U}\right) \\
&\cong \overline{\gamma_n(\Gamma)/\gamma_{n+1}(\Gamma)}
   \end{align}
holds. Moreover, finitely generated nilpotent groups induce the full profinite topology on all their subgroups \cite[p. 223]{Segal1983}, so 
\begin{equation}
\overline{\gamma_n(\Gamma)/\gamma_{n+1}(\Gamma)} \cong \widehat{\gamma_n(\Gamma)/\gamma_{n+1}(\Gamma)}
\end{equation}
is in fact the profinite completion of the abelian group $\gamma_n(\Gamma)/\gamma_{n+1}(\Gamma)$. Combining this with the analogous equation for $\Delta$ and the claim, we obtain
\begin{equation}
\widehat{\gamma_n(\Gamma)/\gamma_{n+1}(\Gamma)} \cong \gamma_n(\hG) / \gamma_{n+1}(\hG) \cong \gamma_n(\hD) / \gamma_{n+1}(\hD) \cong \widehat{\gamma_n(\Delta)/\gamma_{n+1}(\Delta)}
\end{equation}
for all $n \in \mathbb{N}$. But finitely generated abelian groups are profinitely rigid \cite[Corollary 3.2.12]{Wilkes2024}, so we must in fact have $\gamma_n(\Delta)/\gamma_{n+1}(\Delta) \cong \gamma_n(\Gamma)/\gamma_{n+1}(\Gamma)$, as postulated.
\end{proof}

\subsection{Orders in Finite Products} Finally, we state and prove the following elementary lemma regarding orders of elements in finite products groups, which we shall appeal to in Section~\ref{Sec::Lifting}.

\begin{lemma} \label{nocyclic} Let $A,B$ be two finite abelian groups and let $x=(a,b)$ and $y=(1,b')$ be two elements in $A \times B$. Assume that there is a prime number $p$ and positive integers $n,m,k$ with $n \geq m$ satisfying
\[
\operatorname{ord}(a)=p^n, \ \operatorname{ord}(b)=p^m, \ \operatorname{ord}(b')=p^k
\]
where $\operatorname{ord}(-)$ denotes the order of an element. Then the group $\langle x,y \rangle$ generated by $x$ and $y$ in $A \times B$ admits an isomorphism $\langle x,y \rangle \cong C_{p^n} \times C_{p^k}$.
\end{lemma}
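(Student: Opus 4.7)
The plan is to prove this by exhibiting $\langle x,y\rangle$ as an internal direct product of the cyclic groups $\langle x \rangle$ and $\langle y \rangle$, after first checking that these cyclic subgroups have the expected orders $p^n$ and $p^k$. The whole argument is essentially bookkeeping inside the abelian $p$-group $A \times B$, and the only place where the hypothesis $n \geq m$ is used is in computing the order of $x$.

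First I would compute that $\operatorname{ord}(x) = \operatorname{lcm}(\operatorname{ord}(a),\operatorname{ord}(b)) = \operatorname{lcm}(p^n,p^m) = p^n$, using precisely the assumption $n \geq m$. Simultaneously $\operatorname{ord}(y) = \operatorname{ord}(b') = p^k$, since the first coordinate of $y$ is trivial. Hence $\langle x \rangle \cong C_{p^n}$ and $\langle y \rangle \cong C_{p^k}$.

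Next I would verify that $\langle x \rangle \cap \langle y \rangle = \{(1,1)\}$. Any element $y^j \in \langle y \rangle$ has first coordinate equal to $1$. If $x^i = y^j$ for some integer $i$, then projecting to $A$ yields $a^i = 1$, and since $\operatorname{ord}(a) = p^n$, this forces $p^n \mid i$, whence $x^i = (1,1)$ and therefore $y^j = (1,1)$ as well. Thus the two cyclic subgroups meet trivially.

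Since $A \times B$ is abelian, $\langle x \rangle$ and $\langle y \rangle$ commute elementwise, so $\langle x,y\rangle = \langle x \rangle \langle y \rangle$, and the trivial intersection established above upgrades this to an internal direct product $\langle x,y\rangle = \langle x \rangle \times \langle y \rangle \cong C_{p^n} \times C_{p^k}$, as required. There is no serious obstacle here: the one subtlety is that the hypothesis $n \geq m$ is exactly what is needed to guarantee that the first coordinate of $x$ controls its order, which in turn is what makes the intersection argument go through.
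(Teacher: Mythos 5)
Your proof is correct and follows essentially the same route as the paper: both arguments hinge on the projection onto $A$ together with the fact that $y$ lies in its kernel, the only cosmetic difference being that you verify the internal direct product criterion $\langle x\rangle \cap \langle y\rangle = 1$ directly, whereas the paper counts $|D| = |\pi_A(D)|\cdot|D \cap B| \geq p^{n+k}$. Your computation $\operatorname{ord}(x) = \operatorname{lcm}(p^n,p^m) = p^n$, which is where $n \geq m$ enters, matches the paper's (implicit) use of the same fact.
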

\begin{proof} Let $D= \langle x,y\rangle$ and note that $\operatorname{ord}(x)=p^n$ and $ \operatorname{ord}(y)=p^k$ holds. Therefore, it will be sufficient to show that $|D| \geq p^{n+k}=\operatorname{ord}(x) \cdot \operatorname{ord}(y)$, as then we obtain postulated isomorphism $D \cong \langle x \rangle  \times \langle y \rangle \cong C_{p^n} \times C_{p^k}$.
Denote by $\pi_A: A \times B \rightarrow A$ the projection onto $A$. We have $|\pi(D)|=|\langle a \rangle|=p^n$ while $\ker \pi \cap D= B \cap D$ contains $b'$. Hence $|D| = |\pi(D)| | D \cap B| \geq \operatorname{ord}(a) \cdot \operatorname{ord}(b') \geq p^n p^k$ and the result follows.
\end{proof}

\section{Nilpotent Quotients}\label{Sec::Nilpotent}

In this section, we commence with the proof of Theorem~\ref{Thm::MainThm} on the profinite rigidity of the torsion-free lamplighter groups $\Gamma_n = \Z^n \wr \Z$. The main result of this section shall be the following proposition, wherein we outline an algorithm for constructing a subgroup $N \leq \Delta$ for any group $\Delta$ in the profinite genus of $\Gamma$ which corresponds to the module $M \leq \Gamma_n$. We delay the proof that $N$ is abelian and forms a $\Z[\langle t \rangle]$-module until the following section.
\begin{proposition} \label{nilq}
    Let $n \in \mathbb{N}$ be a positive integer, let $\Gamma = \Z^n \wr \Z$ be the associated torsion-free lamplighter group, and let $\Delta$ be a finitely generated residually finite group satisfying $\hD \cong \hG$. There exists a subgroup $N \trianglelefteq \Delta$ such that $\Delta / N \cong \Z$ and for any positive integer $k \in \mathbb{N}$, the inclusion $[N,[\gamma_k(\Delta),\gamma_k(\Delta)]] \leq \gamma_k(\Delta)$ holds.
\end{proposition}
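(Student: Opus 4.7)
My plan is to construct $N$ via the commutator pairings between successive lower central factors of $\Delta$, which intrinsically characterise the image of $M$ in $\Gamma^{ab}$ and transfer to $\Delta$ through the profinite isomorphism. I first observe that the stated inclusion $[N,[\gamma_k(\Delta),\gamma_k(\Delta)]] \leq \gamma_k(\Delta)$ is automatically satisfied for every subgroup $N \leq \Delta$: the standard commutator identity $[\gamma_k,\gamma_k] \leq \gamma_{2k}$ yields $[N,[\gamma_k,\gamma_k]] \leq \gamma_{2k+1}(\Delta) \leq \gamma_k(\Delta)$. The substance of the proposition is therefore the construction of a specific $N \trianglelefteq \Delta$ with $\Delta/N \cong \Z$, chosen to support the module-theoretic analysis of the following section.

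By Proposition~\ref{Prop::ProfiniteInvarianceOfCentralSeries} together with a direct computation in $\Gamma = M \rtimes \langle t\rangle$, in which $\gamma_k(\Gamma) = (t-1)^{k-1}M$ for $k \geq 2$, I obtain $\Delta^{ab} \cong \Gamma^{ab} \cong \Z^{n+1}$ and $\gamma_k(\Delta)/\gamma_{k+1}(\Delta) \cong \Z^n$ for every $k \geq 2$. The image $M^{ab} \leq \Gamma^{ab}$ is the $\Z^n$-summand $\langle e_1,\dots,e_n\rangle$, and admits an intrinsic characterisation via the $\Z$-bilinear commutator pairing
\[
c_k^G \colon G^{ab} \otimes \gamma_k(G)/\gamma_{k+1}(G) \longrightarrow \gamma_{k+1}(G)/\gamma_{k+2}(G), \qquad k \geq 2,
\]
defined for any group $G$ by $(x \otimes y) \mapsto [x,y]$ modulo $\gamma_{k+2}(G)$. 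A direct computation in $\Gamma$ yields $[t,(t-1)^{k-1}e_i] \equiv (t-1)^k e_i \not\equiv 0$ modulo $\gamma_{k+2}(\Gamma)$ while $[e_j,\gamma_k(\Gamma)] \subseteq [M,M] = 0$, so the kernel of $c_k^\Gamma$ in $\Gamma^{ab}$ equals $M^{ab}$, uniformly in $k$.

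I would then transport this characterisation to $\Delta$ as follows. The profinite isomorphism $\hat\Delta \cong \hat\Gamma$, combined with Theorem~\ref{Thm::NS} as employed in Proposition~\ref{Prop::ProfiniteInvarianceOfCentralSeries}, identifies the profinite completion $\hat c_k^\Delta$ with $\hat c_k^\Gamma$. Since the completion functor is exact on finitely generated abelian groups, $\ker \hat c_k^\Delta = \widehat{\ker c_k^\Delta}$, and the profinite rigidity of finitely generated abelian groups then forces $\ker c_k^\Delta \cong \Z^n$ and $\Delta^{ab}/\ker c_k^\Delta \cong \Z$. Defining $N$ as the preimage of $\ker c_2^\Delta$ under the abelianisation $\Delta \to \Delta^{ab}$, I obtain $N \trianglelefteq \Delta$ with $\Delta/N \cong \Z$; the commutator condition holds by the initial observation.

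The principal obstacle is the passage from the profinite matching of the commutator pairings to a purely integral statement about $\ker c_k^\Delta$. Theorem~\ref{Thm::NS}, by guaranteeing that the lower central series of $\hat\Delta$ coincides with the closure of that of $\Delta$, is the essential ingredient enabling this bridge.
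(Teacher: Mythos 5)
You have read the displayed inclusion literally and observed that $[N,[\gamma_k(\Delta),\gamma_k(\Delta)]]\leq\gamma_{2k+1}(\Delta)\leq\gamma_k(\Delta)$ holds for every subgroup $N$; that observation is correct, but this is evidently a misprint in the statement. The property the paper actually proves, and the one that is indispensable downstream (in Lemma~\ref{centralizer}, where one needs $\nu\in N$ to act trivially on $W=D/(qD+\gamma_{q^l+2}(\Delta))$, and in Remark~\ref{Rem::ModuleDescription}), is that $N$ centralises $[\Delta,\Delta]$ modulo $\gamma_k(\Delta)$ for \emph{every} $k$, i.e.\ $[N,[\Delta,\Delta]]\leq\gamma_k(\Delta)$ for all $k$. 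Your $N$, defined as the preimage of the radical of the pairing $c_2^\Delta$, only yields $[N,\gamma_2(\Delta)]\leq\gamma_4(\Delta)$; even using all the pairings $c_k^\Delta$ simultaneously you would only get $[N,\gamma_k(\Delta)]\leq\gamma_{k+2}(\Delta)$ for each $k$, and this does not iterate: knowing $[\nu,y]\in\gamma_4(\Delta)$ for $y\in\gamma_2(\Delta)$ together with $[\nu,\gamma_3(\Delta)]\leq\gamma_5(\Delta)$ gives no further depth for $[\nu,y]$ itself. So the genuine content of the proposition is missing from your argument. The paper's proof establishes it by setting $T_m$ equal to the preimage of $C_{\Delta_m}([\Delta_m,\Delta_m])$ in $\Delta$ and proving $T_m=T_4$ for all $m\geq 4$ with $\Delta/T_4\cong\Z$: the conjugation image $L_m$ lies in a unitriangular group, whose congruence subgroup property shows every finite quotient of $L_m$ is of the form $\Delta_m/C_{\Delta_m}(V_m/kV_m)$, hence cyclic by Lemma~\ref{cyclic} (itself a nontrivial profinite-transfer statement); profinite rigidity of finitely generated abelian groups then makes $L_m$ cyclic, the surjections $L_m\twoheadrightarrow L_4$ together with Hopficity of $\Z$ force stabilisation, and the quotients $Q_p$ show $L_4$ is infinite. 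None of these ingredients appear in your proposal, and your construction cannot reach the centralisation property without them.

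There are also smaller gaps in your transfer step. The claim that profinite rigidity of finitely generated abelian groups ``forces $\ker c_k^\Delta\cong\Z^n$ and $\Delta^{\mathrm{ab}}/\ker c_k^\Delta\cong\Z$'' is too quick: a dense copy of $\Z^{n+1}$ inside $\pZ^{n+1}$ can meet a $\pZ^n$-summand trivially, so one cannot pass from $\ker\hat c_k^\Delta\cong\pZ^n$ to the rank of the discrete kernel without an argument. What saves it is that the radical of $c_k^\Delta$ is the kernel of the induced map $\Delta^{\mathrm{ab}}\to\operatorname{Hom}(\gamma_k(\Delta)/\gamma_{k+1}(\Delta),\gamma_{k+1}(\Delta)/\gamma_{k+2}(\Delta))$ between finitely generated abelian groups, so by flatness of $\pZ$ its closure equals $\ker\hat c_k^\Delta$, and it is a direct summand of $\Delta^{\mathrm{ab}}$ because the quotient embeds in a torsion-free group; this, together with the identification $\overline{\gamma_k(\Delta)}=\gamma_k(\hD)$ from Theorem~\ref{Thm::NS} as in Proposition~\ref{Prop::ProfiniteInvarianceOfCentralSeries}, makes that part patchable. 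Incidentally, granting the paper's Proposition~\ref{nilq}, your $N$ coincides with the paper's $T_4$ (since $T_4\leq N$ and both quotients are $\Z$), but that does not help: proving the centralisation property for this subgroup is precisely the point of the proposition.
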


To prove Proposition~\ref{nilq}, we shall need several preliminary results. Indeed, let $n \in \mathbb{N}$ be a positive integer and $\Gamma = \Z^n \wr \Z$ be the associated torsion-free lamplighter group. Suppose that $\Delta$ is a finitely generated residually finite group admitting an isomorphism of profinite completions $\widehat{\Delta} \cong \widehat{\Gamma}$. Using Proposition~\ref{Prop::ProfiniteInvarianceOfCentralSeries} and the lower central series of $\Gamma$, we obtain the following.

\begin{lemma}\label{Lem::CentralSeries}
    There are isomorphisms $\Delta^\mathrm{ab} \cong \Z^{n+1}$ and $\gamma_m(\Delta)/\gamma_{m+1}(\Delta) \cong \mathbb Z^{m}$ for all positive integers $m > 1$. 
\end{lemma}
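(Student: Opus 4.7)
The plan is to apply Proposition~\ref{Prop::ProfiniteInvarianceOfCentralSeries} as the first step, which instantly gives $\gamma_m(\Delta)/\gamma_{m+1}(\Delta) \cong \gamma_m(\Gamma)/\gamma_{m+1}(\Gamma)$ for every $m \in \mathbb{N}$ (and in particular the abelianisation case $m=1$). This reduces the lemma to an internal computation of the lower central quotients of $\Gamma = \Z^n \wr \Z$ itself, which I would carry out using the semidirect-product description $\Gamma = M \rtimes \langle t\rangle$ of (\ref{Eq::DefOfLL}), where $M = \Lambda^n$ is free over $\Lambda = \Z[t^{\pm 1}]$.

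For the abelianisation, since $\Gamma/M \cong \Z$ is abelian, $[\Gamma,\Gamma] \leq M$, and the formula $[t,v] = (t-1)v$ for $v \in M$ (writing $M$ additively) together with the fact that commutators inside the abelian group $M$ vanish forces $[\Gamma,\Gamma] = (t-1)M$. Hence
\[
\Gamma^{\mathrm{ab}} \cong M/(t-1)M \oplus \langle t\rangle \cong \Z^n \oplus \Z = \Z^{n+1},
\]
matching the first claim after transport along Proposition~\ref{Prop::ProfiniteInvarianceOfCentralSeries}.

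For $m \geq 2$, I would prove by induction on $k \geq 2$ that $\gamma_k(\Gamma) = (t-1)^{k-1}M$. Given the inductive hypothesis, $\gamma_{k+1}(\Gamma) = [\Gamma,\gamma_k(\Gamma)]$ is generated by commutators $[t^a,(t-1)^{k-1}v]$ (those with first entry in $M$ vanish), and these equal $(t^a-1)(t-1)^{k-1}v = (t-1)^k(1+t+\cdots+t^{a-1})v$, so their $\Lambda$-span is exactly $(t-1)^k M$. Since $\Lambda$ is a domain and $M$ is free, multiplication by $(t-1)^{m-1}$ on $M$ is injective and descends to an isomorphism $M/(t-1)M \xrightarrow{\sim} (t-1)^{m-1}M/(t-1)^mM$, and $M/(t-1)M \cong (\Lambda/(t-1))^n \cong \Z^n$ finishes the computation.

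There is no real obstacle here: all the profinite content has been absorbed into Proposition~\ref{Prop::ProfiniteInvarianceOfCentralSeries}, and what remains is a routine wreath-product commutator calculation exploiting the $\Lambda$-module structure of the base. The only subtlety worth double-checking is that the exponent in the statement is intended to be the rank $n$ of the lamplighter (independent of $m$), which is what the computation on $\Gamma$ delivers.
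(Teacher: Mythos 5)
Your proof is correct and follows the paper's intended route exactly: the paper offers no separate argument, stating only that the lemma follows from Proposition~\ref{Prop::ProfiniteInvarianceOfCentralSeries} together with the lower central series of $\Gamma$, which is precisely the transfer-plus-computation ($\gamma_k(\Gamma)=(t-1)^{k-1}M$ for $k\geq 2$, so each factor is $M/(t-1)M\cong\Z^n$ and $\Gamma^{\mathrm{ab}}\cong\Z^{n+1}$) that you carry out. You are also right about the exponent: the ``$\Z^{m}$'' in the statement is a typo for $\Z^{n}$, as the paper's subsequent uses confirm (e.g.\ $V_m\cong\Z^{n(m-2)}$ in the proof of Proposition~\ref{nilq} and the decomposition in Remark~\ref{Rem::ModuleDescription}).
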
 

Some further restriction on $\Delta$ is given by the following lemma.

\begin{lemma} \label{cyclic} Let $U \trianglelefteq \Delta$ be a normal subgroup satisfying $U \leq [\Delta, \Delta]$ and whose index in $[\Delta, \Delta ]$ is finite. Consider the finite $\mathbb Z[\Delta^\mathrm{ab}]$-module $T =[\Delta,\Delta]/U$. The image of the natural map $\Delta \twoheadrightarrow \Delta / C_\Delta(T) \to \operatorname{Aut}_\mathbb Z(T)$ is a finite cyclic group.
\end{lemma}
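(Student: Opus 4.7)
The plan is to transport the problem across the profinite isomorphism $\hD \cong \hG$ and reduce to a direct inspection on the lamplighter group $\Gamma$, where cyclicity is manifest because $[\Gamma, \Gamma]$ lies inside the abelian normal subgroup $M$. The key observation is that the image of the $\Delta$-action on $T$ is intrinsically determined by the profinite isomorphism type of the pair $(\hD, \overline{U})$, so it can be computed on the $\Gamma$-side.

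First, I would identify $U$ with a counterpart inside $\Gamma$. By Theorem~\ref{Thm::NS}, one has $\overline{[\Delta, \Delta]} = [\hD, \hD]$ and analogously $\overline{[\Gamma, \Gamma]} = [\hG, \hG]$, and these closed subgroups correspond under the isomorphism $\hD \cong \hG$. The closure $\overline{U}$ is a closed normal subgroup of $\hD$ of finite index in $[\hD, \hD]$, hence open in $[\hD, \hD]$. Letting $V \leq \hG$ denote the image of $\overline{U}$ under the profinite isomorphism and setting $U' := V \cap \Gamma$, I obtain a normal subgroup $U' \trianglelefteq \Gamma$ of finite index in $[\Gamma, \Gamma]$, with $\overline{U'} = V$ (since $V$ is open in $\overline{[\Gamma, \Gamma]}$ and $[\Gamma, \Gamma]$ is dense there). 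This yields a canonical isomorphism $T = [\Delta, \Delta]/U \cong \overline{[\Delta, \Delta]}/\overline{U} \cong \overline{[\Gamma, \Gamma]}/V \cong [\Gamma, \Gamma]/U' =: T'$ that intertwines the conjugation actions from $\hD$ and $\hG$.

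Next, I would compare the images. The conjugation action of $\Delta$ on $T$ extends to a continuous homomorphism $\hD \to \operatorname{Aut}_{\Z}(T)$; since $\Delta$ is dense in $\hD$ and $\operatorname{Aut}_{\Z}(T)$ is finite and discrete, the image of $\Delta$ equals the image of $\hD$. The same holds for $\Gamma$ and $T'$. Because the identification above intertwines the two actions, the images of $\hD$ and $\hG$ in $\operatorname{Aut}_{\Z}(T) \cong \operatorname{Aut}_{\Z}(T')$ coincide, so it suffices to verify that the image of $\Gamma$ in $\operatorname{Aut}_{\Z}(T')$ is cyclic.

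The final step is an immediate calculation. Writing $\Gamma = M \rtimes \langle t \rangle$ as in (\ref{Eq::DefOfLL}), we have $[\Gamma, \Gamma] \subseteq M$, so the abelian normal subgroup $M$ acts trivially on $T'$ by conjugation. Hence the action of $\Gamma$ on $T'$ factors through the cyclic quotient $\Gamma/M \cong \langle t \rangle$, yielding a cyclic image. The main technical nuance will lie in the first step, where one must carefully set up the correspondence of $U$ under the profinite isomorphism and check that the induced module structures agree; the remaining ingredients are routine density arguments and a one-line computation inside $\Gamma$.
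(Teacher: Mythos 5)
Your overall strategy is the same as the paper's: transport $U$ across the isomorphism $\hD \cong \hG$, identify $T$ with a corresponding finite quotient on the $\Gamma$-side, and use that the statement for $\Gamma$ is immediate because $M = C_\Gamma([\Gamma,\Gamma])$ forces the action to factor through $\Gamma/M \cong \Z$. Your comparison of images by conjugating under the isomorphism is a harmless simplification of the paper's ``twisted module'' bookkeeping, since only the abstract image matters here; the continuity/density step is also fine (the centraliser of $\overline{[\Delta,\Delta]}/\overline{U}$ in $\hD$ is open because $\overline{U}$ is open in $\overline{[\Delta,\Delta]}$ and the latter has finitely many cosets to control).

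There is, however, a genuine gap at the very step you dismiss as routine: the claimed isomorphism $T = [\Delta,\Delta]/U \cong \overline{[\Delta,\Delta]}/\overline{U}$. Surjectivity of the natural map is indeed a density argument, but injectivity requires $\overline{U} \cap [\Delta,\Delta] = U$, i.e.\ that some finite-index subgroup of $\Delta$ meets $[\Delta,\Delta]$ inside $U$. This is a separability statement about the profinite topology of $\Delta$ restricted to the infinite-index subgroup $[\Delta,\Delta]$, and it is not a formal consequence of residual finiteness of $\Delta$: if the containment $U \subseteq \overline{U}\cap[\Delta,\Delta]$ were strict, your argument would only prove cyclicity of the image of $\Delta$ in $\operatorname{Aut}_\Z\bigl(\overline{[\Delta,\Delta]}/\overline{U}\bigr)$, which is the automorphism group of a \emph{proper} quotient of $T$ and gives no control over the image in $\operatorname{Aut}_\Z(T)$. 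The paper closes exactly this hole by citing \cite[Lemma 5.2]{Wykowski2025_2}, which identifies $\overline{[\Delta,\Delta]}$ with the full $\Z[\Delta^\mathrm{ab}]$-module profinite completion of $[\Delta,\Delta]$, so that every finite-index submodule such as $U$ is automatically open in the induced topology. Within your set-up the gap can alternatively be filled as follows: $[\Delta,\Delta]$ embeds (via residual finiteness of $\Delta$) into $[\hD,\hD] \cong [\hG,\hG] = \overline{[\Gamma,\Gamma]}$, which is abelian, so $\Delta$ is finitely generated metabelian; by P.\ Hall's theorem every quotient of such a group is residually finite, hence $\Delta/U$ is residually finite and $U$ is an intersection of finite-index subgroups of $\Delta$, which yields $\overline{U}\cap\Delta = U$. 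With that inserted (and the corresponding, but easy, observation that $\overline{[\Gamma,\Gamma]}\cap\Gamma = [\Gamma,\Gamma]$ so that $U' = V\cap\Gamma$ really lies in $[\Gamma,\Gamma]$ with $\overline{U'} = V$), your proof is correct.
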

\begin{proof}
    To begin, note that the analogous result for $\Gamma$ follows from the decomposition (\ref{Eq::DefOfLL}) and the observation that $M = C_\Gamma([\Gamma,\Gamma])$. Now let $U \trianglelefteq \Delta$ be a normal subgroup contained in $D = [\Delta, \Delta]$ with finite index and let $E = [\Gamma,\Gamma]$. Write $\Lambda = \Z[\Delta^\mathrm{ab}] \cong \Z[\Gamma^\mathrm{ab}]$ and consider $D$ and $E$ as $\Lambda$-modules where $\Lambda$ acts via conjugacy within $\Delta$ and $\Gamma$, respectively. By \cite[Lemma 5.2]{Wykowski2025_2}, the closures $\overline{D}$ and $\overline{E}$ within the respective profinite completions $\hD$ and $\hG$ are isomorphic to the completions $\widehat{D}$ and $\widehat{E}$ as $\Lambda$-modules. By assumption, there exists an isomorphism of profinite groups $\phi \colon \hD \to \hG$, so we obtain a commutative diagram
    \[
    \begin{tikzcd}
0 \arrow[r] & {\widehat{D} \cong \overline{[\Delta, \Delta]}} \arrow[r] \arrow[d, "f"] & \hD \arrow[r] \arrow[d, "\phi"] & \hD^\mathrm{ab} \cong \widehat{\Z}^{n+1} \arrow[r] \arrow[d, "\phi^\mathrm{ab}"] & 0 \\
0 \arrow[r] & {\widehat{E} \cong \overline{[\Gamma, \Gamma]}} \arrow[r]                & \hG \arrow[r]                   & \hG^\mathrm{ab} \cong \widehat{\Z}^{n+1} \arrow[r]                               & 0
\end{tikzcd}
    \]
with exact rows and whose columns are isomorphisms of profinite abelian groups. Note that the abelianised map $\phi^\mathrm{ab}$ induces an automorphism $\alpha \colon \widehat{\Lambda} \xrightarrow{\sim}\widehat{\Lambda}$ as a profinite ring. Moreover, the isomorphism of infinitely generated profinite abelian groups $f \colon \widehat{D} \xrightarrow{\sim} \widehat{E}$ is not necessarily a morphism of $\Lambda$-modules, but satisfies the twisted identity
\begin{equation}
    f(\lambda \cdot m) = \alpha(\lambda) \cdot f(m) 
\end{equation}
whenever $\lambda \in \widehat{\Lambda}$ and $m \in \widehat{D}$. As $U$ is normal in $\Delta$, it forms a submodule for the conjugacy $\Lambda$-module structure, and $T = D/U$ forms a finite $\Lambda$-module quotient of the $\Lambda$-profinite completion $\widehat{D}$. The twisted isomorphism $f \colon \widehat{D} \to \widehat{E}$ hence descends to an isomorphism of finite abelian groups
\[
\widetilde{f} \colon \widehat{D}/\overline{U} \xrightarrow{\sim} \widehat{E}/f(\overline{U})
\]
which again satisfies the twisted identity
\begin{equation}\label{Eq::Twist}
    \widetilde f(\lambda \cdot m) = \alpha(\lambda) \cdot\widetilde f(m) 
\end{equation}
whenever $\lambda \in \widehat{\Lambda}$ and $m \in \widehat{D}/\overline{U}$. As observed before, the analogue of the postulated result for $\Gamma$ holds true, so the image 
\[
I_\Gamma := \operatorname{Im}\left(\hG^\mathrm{ab} \to \operatorname{Aut}_\Z(\widehat E/f(\overline U))\right) = \operatorname{Im}\left(\Gamma^\mathrm{ab} \to \operatorname{Aut}_\Z(\widehat E/f(\overline U)))\right)
\]
is a finite cyclic group. But (\ref{Eq::Twist}) gives
\[
\operatorname{Im}\left(\Delta^\mathrm{ab} \to \operatorname{Aut}_\Z(T)\right) = \operatorname{Im}\left(\widehat \Delta^\mathrm{ab} \to \operatorname{Aut}_\Z(\widehat D/\overline U)\right) = \alpha(I_\Gamma) = \phi^\mathrm{ab}(I_\Gamma)
\]
which is then also a finite cyclic group, as postulated.
\end{proof}

We now commence with the proof of Proposition~\ref{Prop::LiftingCentralisers}. Given a positive integer $m > 2$, let $\Delta_m=\Delta/\gamma_m(\Delta)$ and consider the group $V_m=[\Delta_m,\Delta_m]=\gamma_2(\Delta)/\gamma_m(\Delta)$ which is abelian as $\hD \cong \hG$ and $
\Gamma$ is metabelian. Moreover, Lemma~\ref{Lem::CentralSeries} yields $V_m \cong \Z^{n(m-2)}$ for any positive integer $m >2$. The group $\Delta_m^\mathrm{ab}$ acts by conjugation on $V_m$ fixing the lower central factors $\gamma_s(\Delta_m)/\gamma_{s+1}(\Delta_m) \cong \Z^n$ set-wise for all $s=2,3, \ldots ,m-1$ and hence its image $L_m$ in $\operatorname{Aut}(V_m)$ is a subgroup of the group $\operatorname{UT}_{n(m-2)}(\Z)$ of unitrianlgular matrices with respect to an appropriate choice of basis of $V_m$. Now $\operatorname{UT}_{n(m-2)}(\Z)$ is a finitely generated nilpotent group, so it induces the full profinite topology on $L_m$ by \cite[pp. 223]{Segal1983}. In turn, this topology coincides with its congruence topology (defined by the principal congruence subgroups $\operatorname{UT}_{n(m-2)}(k \Z)$ for positive integers $k$) by \cite[Theorem 5]{Formanek1976}. We conclude that any finite image of $L_m$ is a congruence image, meaning that it factors through a principal congruence image $L_m/L_m(k)$ for some $k \in \mathbb{N}$, where $L_m(k)=L \cap \operatorname{UT}_{n(m-2)}(k \Z)$ consists of all matrices in $L$ that are congruent to $\mathrm{Id}_{n(m-2)}$ mod $k\Z$.

Now let $T_m = C_\Delta(V_m)$ be the preimage of the centraliser of $V_m$ under the map $\Delta \twoheadrightarrow \Delta_m$. We claim that $T_m = T_4$ for all $m \geq 4$ and $\Delta / T_4 \cong \Z$ holds. Indeed, choose any positive integer $m \geq 4$. By the preceding argument, all finite quotients of the group $\Delta / T_m \cong L_m$ are of the form
\[
\frac{L_m}{L_m(k)} \cong \frac{\Delta_m}{C_{\Delta_m}(V_m/kV_m)}
\] which is a cyclic group by Lemma~\ref{cyclic}. Via \cite[Corollary 3.2.12]{Wilkes2024}, it follows that $L_m$ itself is cyclic. Moreover, $T_m \leq T_4$, so $L_m \cong \Delta / T_{m} \twoheadrightarrow \Delta / T_4 \cong L_4$. But the only cyclic group surjecting onto $\Z$ is $\Z$ itself, and $\Z$ is Hopfian. Thus, the first statement in the claim follows from the second, and to prove the claim, it will suffice to show that $\cong \Delta / T_4$ is infinite. 

To see this, choose any odd prime number $p$. By assumption, $\Delta$ has the same finite images as $\Gamma$, so in particular we obtain an epimorphism 
\[
\Delta_4 \twoheadrightarrow Q_p = \left(\frac{\mathbb{F}_p[x]}{(x-1)^3}\right)^n \rtimes C_p
\] where the generator of $C_p$ acts via multiplication by $x$. The action of $x$ on the commutator $[Q_p, Q_p ] = \left(\frac{(x-1)}{(x-1)^3}\right)^n$ is non-trivial, so in particular $L_4 \twoheadrightarrow C_p$. As this holds for any odd prime $p$, the group $L_4 \cong \Delta / T_4 $ must be infinite, and the claim follows.

To complete the proof of the proposition, define $N = T_4$. By the first part of the claim, we have $N = T_m = C_\Delta(V_m)$ for all $m \geq 4$, so $N$ centralizes $[\Delta, \Delta]$ modulo $\gamma_m(\Delta)$ for each $m \in \mathbb{N}$. On the other hand, the second part of the claim yields $\Delta / N \cong \Z$. This completes the proof of  Proposition~\ref{nilq}.

\begin{remark}\label{Rem::ModuleDescription}
    Let $N \trianglelefteq \Delta$ be as in Proposition~\ref{nilq} and choose $\zeta \in \Delta$ such that $\langle \zeta, N \rangle = \Delta$. For any $m > 2$, there exists an isomorphism of $\Z[x^\pm]$-modules
    \[
    \frac{\gamma_2(\Delta)}{\gamma_m(\Delta)} \cong \left(\frac{\Z[x]}{(x-1)^{m-2}}\right)^n
    \]
    where $x$ acts as conjugation by $\zeta$ on the left and via multiplication on the right.
\end{remark}

\begin{proof}
    By Proposition~\ref{nilq}, there exists $\zeta \in \Delta$ whose image generates $\Delta/N$. As demonstrated in the proof of Proposition~\ref{nilq}, for any $m >2$, there is an isomorphism of graded abelian groups
    \[
    \frac{\gamma_2(\Delta)}{\gamma_m(\Delta)} \cong \bigoplus_{i=2}^{m-1} \frac{\gamma_{i}(\Delta)}{\gamma_{i+1}(\Delta)} \cong \bigoplus_{i=2}^{m-1} \Z^n
    \]
    and the conjugation action of $\Delta$ is upper triangular with respect to the grading. Choose elements $\{\alpha_{1,1}, \ldots, \alpha_{n,1}\} \subset \gamma_2(\Delta)$ which are a basis for the free abelian group $\frac{\gamma_2(\Delta)}{\gamma_3(\Delta)}$. Since the ranks of the free abelian groups $\frac{\gamma_2(\Delta)}{\gamma_3(\Delta)}$ and $\frac{\gamma_3(\Delta)}{\gamma_4(\Delta)}$ are equal we conclude that the elements $\alpha_{i,2}:= [\zeta, \alpha_{i,1}]= \zeta \alpha_{i,1} \zeta^{-1} -a_{i,1}$ for $i=1, \ldots, n$, are a basis for $\frac{\gamma_3(\Delta)}{\gamma_4(\Delta)}$. Continuing in the same way we define  $\alpha_{i,j+1}:=[\alpha_{i,j}, \zeta]$  for all $1 \leq i \leq n$ and $1 \leq j< m-2$, and note that $\zeta \alpha_{i,m-2} \zeta^{-1} = \alpha_{i,m-2}$ holds for all $1 \leq i \leq n$. The free abelian group $\frac{\gamma_2(\Delta)}{\gamma_m(\Delta)}$ has basis $\{\alpha_{i,j} \ | \ 1 \leq i \leq n, 1 \leq j \leq m-2 \}$ and acquires the structure of a $\Z[x]$-module whereby $x$ acts as conjugation by $\zeta$ within $\Delta$. Consider now the homomorphism of $\Z[x]$-modules \[
    \phi \colon \Z[x]^n \to \frac{\gamma_2(\Delta)}{\gamma_m(\Delta)}
    \]
    generated by mapping the $i$-th basis vector $e_i$ of $\Z[x]^n$ to $\phi(e_i) = \alpha_{i,1}$. This mapping is surjective as $\alpha_{i,j} = \phi\left((x-1)^{j-1} \alpha_{i,1}\right)$ holds for all $i$ and $j > 1$. On the other hand, one finds that $\operatorname{Ker}(\phi) = (x-1)^{m-2}$. Thus $\phi$ descends to the postulated isomorphism. 
\end{proof}

\section{Lifting Centralisers}\label{Sec::Lifting}

In this section, we upgrade Proposition~\ref{nilq} from a relative statement in maximal nilpotent quotients to a global statement about any group $\Delta$ in the profinite genus of $\Gamma$. Concretely, we prove the following result. 
\begin{proposition}\label{Prop::LiftingCentralisers}
    Let $n \in \mathbb{N}$ be a positive integer, let $\Gamma = \Z^n \wr \Z$ be the associated torsion-free lamplighter group, and let $\Delta$ be a finitely generated residually finite group satisfying $\hD \cong \hG$. There exists an abelian normal subgroup $N \trianglelefteq \Delta$ which centralises the commutator $[\Delta,\Delta]$ in $\Delta$ and satisfies $\Delta / N \cong \Z$.
\end{proposition}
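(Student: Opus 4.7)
My plan is to take the normal subgroup $N$ produced by Proposition~\ref{nilq} and show that it is abelian; since $\Delta/N \cong \Z$ forces $[\Delta,\Delta] \leq N$, centralisation of $[\Delta,\Delta]$ in $\Delta$ then follows automatically. I first verify that $[\Delta,\Delta]$ is itself abelian. Every finite quotient of $\Gamma$ is metabelian because $\Gamma$ is, and hence so is every finite quotient of $\Delta$ via the equality $\mathcal{C}(\Delta) = \mathcal{C}(\Gamma)$. For any $g,h \in [\Delta,\Delta]$ the commutator $[g,h]$ therefore vanishes in every finite quotient of $\Delta$, and so $[g,h] = 1$ by residual finiteness of $\Delta$.

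Next, for each $m \geq 4$ I would argue that the image $\bar N := N\gamma_m(\Delta)/\gamma_m(\Delta)$ in the finitely generated nilpotent quotient $\Delta_m := \Delta/\gamma_m(\Delta)$ is abelian. By the construction in Proposition~\ref{nilq}, $\bar N$ coincides with the centraliser $C_{\Delta_m}(V_m)$ of $V_m = \gamma_2(\Delta_m)$ in $\Delta_m$. The induced isomorphism $\widehat{\Delta_m} \cong \widehat{\Gamma_m}$ matches centralisers of corresponding subgroups, and in $\Gamma_m$ the centraliser of $\gamma_2(\Gamma_m)$ is $M/\gamma_m(\Gamma)$, which is abelian because $M$ is. Hence $\widehat{\bar N}$ is abelian, and since $\bar N$ embeds into its own profinite completion as a subgroup of a finitely generated nilpotent group, $\bar N$ is abelian. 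In particular $[N,N] \leq \gamma_m(\Delta)$ for every $m \geq 4$, so $[N,N] \leq \gamma_\omega(\Delta) := \bigcap_m \gamma_m(\Delta)$, and the same device upgrades the content of Proposition~\ref{nilq} to $[N,[\Delta,\Delta]] \leq \gamma_\omega(\Delta)$.

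The crux, and main obstacle, is then to show that $\gamma_\omega(\Delta) \cap [\Delta,\Delta] = 1$. My strategy is to equip $[\Delta,\Delta]$ with the $\Z[x^\pm]$-module structure from Remark~\ref{Rem::ModuleDescription}, where $x$ acts by conjugation by a preimage $\zeta \in \Delta$ of a generator of $\Delta/N$. The quotient $P := [\Delta,\Delta]/[N,[\Delta,\Delta]]$ inherits a finitely generated $\Z[x^\pm]$-module structure: the $N$-action on $P$ is trivial by construction, so the $\Z[\Delta^{\mathrm{ab}}]$-action on $[\Delta,\Delta]$, which is finitely generated as a normal subgroup of the finitely generated group $\Delta$, collapses through the one-variable subring $\Z[\Delta/N] = \Z[x^\pm]$. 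The commutator identities in the proof of Remark~\ref{Rem::ModuleDescription} show that the image of $\gamma_m(\Delta)$ in $P$ is exactly $(x-1)^{m-2} P$, so Krull's intersection theorem for the Noetherian domain $\Z[x^\pm]$ and the ideal $(x-1)$ forces $\bigcap_m (x-1)^{m-2} P = 0$, hence $\gamma_\omega(\Delta) \leq [N,[\Delta,\Delta]]$. Combined with the reverse inclusion this yields the equality $\gamma_\omega(\Delta) = [N,[\Delta,\Delta]]$, and a short iteration using that $[N, \gamma_k(\Delta)] \leq \gamma_{k+1}(\Delta) \cap \gamma_\omega(\Delta)$ successively drives both $[N,N]$ and $[N,[\Delta,\Delta]]$ to the trivial subgroup. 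The principal technical hurdle I anticipate is verifying that the image of $\gamma_m(\Delta)$ in $P$ is exactly $(x-1)^{m-2} P$, rather than merely contained in it modulo a residual term from $\gamma_\omega(\Delta)$, and then executing the iteration step cleanly so that the equality $\gamma_\omega(\Delta) = [N,[\Delta,\Delta]]$ forces triviality.
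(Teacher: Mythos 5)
The fatal step is your appeal to Krull's intersection theorem. For a finitely generated module $P$ over $\Z[x^{\pm}]$, the Krull/Artin--Rees theorem does \emph{not} give $\bigcap_m (x-1)^m P = 0$; it only identifies this intersection with the set of elements annihilated by some element of $1+(x-1)\Z[x^{\pm}]$, and it vanishes only under extra hypotheses (the ideal lying in the Jacobson radical, torsion-freeness, etc.) which you do not have. Concretely, nothing you have established excludes that $P = [\Delta,\Delta]/[N,[\Delta,\Delta]]$ contains a submodule isomorphic to $\Z[x^{\pm}]/(x-2)\cong\Z[\tfrac12]$ with $\zeta$ acting as multiplication by $2$: on such a piece $x-1$ acts invertibly, so it lies in $(x-1)^mP$ for every $m$ and is invisible in every nilpotent quotient of $\Delta$. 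Ruling out exactly these ``$(x-1)$-coprime'' pieces is the real content of the paper's Lemma~\ref{centralizer}, and the paper explicitly flags the issue: the statement would be immediate if $\Delta$ were known to be residually nilpotent, but there is no a priori reason for that. The paper's proof gets around it by passing to finite $\mathbb{F}_p[\Delta^{\mathrm{ab}}]$-module quotients $V$ of $[\Delta,\Delta]$, splitting $V=V_0\oplus V_1$ by the primary decomposition with respect to $(x-1)$, and playing the unipotent quotient $W=D/(pD+\gamma_{p^l+2}(\Delta))$ against $V_1$ via the order-counting Lemma~\ref{nocyclic} to contradict the cyclicity of the image of $\Delta$ in $\operatorname{Aut}$ of finite quotients of $[\Delta,\Delta]$ (Lemma~\ref{cyclic}). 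Your argument invokes neither of these finite-quotient constraints at the crux, so the gap cannot be patched by tightening the exposition; some input of this kind is needed.

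The surrounding material is largely sound, with smaller caveats. Showing $\Delta$ is metabelian from $\mathcal{C}(\Delta)=\mathcal{C}(\Gamma)$ is fine, and your reversal of the paper's order (prove $N$ abelian first, deduce centralisation from $[\Delta,\Delta]\leq N$) would be legitimate if the crux held; the paper instead proves centralisation first and then abelian-ness via the cofinal family $C_m^n\wr C_m$. In your step showing $\overline{N}=C_{\Delta_m}(V_m)$ is abelian, ``matching centralisers'' requires the computation that the centraliser of $\gamma_2$ in the \emph{profinite} group $\widehat{\Gamma_m}$ is abelian (not just in $\Gamma_m$) -- true, but it needs the torsion-freeness of $(x-1)^2\widehat{M}/(x-1)^3\widehat{M}$ over $\pZ$, and it recovers only $[N,N]\leq\gamma_\omega(\Delta)$, which the paper's Proposition~\ref{nilq} already essentially provides. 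Finally, even granting $\gamma_\omega(\Delta)=[N,[\Delta,\Delta]]$, the concluding ``short iteration'' to triviality is unsubstantiated: that equality by itself does not force either subgroup to shrink.
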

We split the proof of Proposition~\ref{Prop::LiftingCentralisers} into the two components of its statement: see Lemma~\ref{centralizer} and Lemma~\ref{Lem::Abelian} below. Note that Lemma~\ref{centralizer} would follow immediately from Proposition~\ref{nilq} if one knew that $\Delta$ is residually nilpotent; however, unlike for $\Gamma$, there is no reason a priori to believe that $\Delta$ satisfies this condition.

Let $n \in \mathbb{N}$ be a positive integer, let $\Gamma = \Z^n \wr \Z$ be the associated torsion-free lamplighter group, and let $\Delta$ be a finitely generated residually finite group satisfying $\hD \cong \hG$. Let $N \trianglelefteq \Delta$ be the subgroup provided by Proposition~\ref{nilq}.

\begin{lemma} \label{centralizer} The subgroup $N \trianglelefteq \Delta$ centralises the derived subgroup $\gamma_2(\Delta)$ of $\Delta$.
\end{lemma}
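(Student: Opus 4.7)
The plan is to establish the identification $\phi(\overline N) = \overline M$ as closed subgroups of $\widehat\Gamma$, and to deduce the lemma from it. Granted this equality, $\overline M$ is abelian in $\widehat\Gamma$ (since $M$ is abelian in $\Gamma$), and by Theorem~\ref{Thm::NS} we have $\gamma_2(\widehat\Gamma) = \overline{[\Gamma, \Gamma]} \leq \overline M$; consequently $[\overline M, \gamma_2(\widehat\Gamma)] = 1$. Transferring this identity through $\phi^{-1}$ yields $[\overline N, \gamma_2(\widehat\Delta)] = 1$, and since $\Delta \hookrightarrow \widehat\Delta$ by residual finiteness, this restricts to the sought identity $[N, \gamma_2(\Delta)] = 1$.

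To prove $\phi(\overline N) = \overline M$, I would characterise both subgroups as profinite centralisers of the lower central quotients $W_m := \gamma_2(\widehat\Delta)/\gamma_m(\widehat\Delta)$ and $W_m^\Gamma := \gamma_2(\widehat\Gamma)/\gamma_m(\widehat\Gamma)$. The proof of Proposition~\ref{nilq} shows that $N = C_\Delta(V_m)$ for all $m \geq 4$, where $V_m = \gamma_2(\Delta)/\gamma_m(\Delta)$, and an analogous direct computation in $\Gamma$ (using that $M$ is abelian and contains $[\Gamma, \Gamma]$) yields $M = C_\Gamma(V_m^\Gamma)$. Since $\phi$ is a group isomorphism, it transports $\gamma_k(\widehat\Delta)$ to $\gamma_k(\widehat\Gamma)$ for every $k$; hence $\phi$ restricts to an isomorphism $W_m \to W_m^\Gamma$ compatible with the conjugation actions, so $\phi(C_{\widehat\Delta}(W_m)) = C_{\widehat\Gamma}(W_m^\Gamma)$. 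It then suffices to prove $\overline N = C_{\widehat\Delta}(W_m)$ for $m \geq 4$.

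Since $W_m$ is abelian, conjugation factors through $\widehat{\Delta}^{\mathrm{ab}}$. The splitting $\Delta^{\mathrm{ab}} = \langle \bar\zeta \rangle \oplus (N/A)$, with $\zeta$ as in Remark~\ref{Rem::ModuleDescription} and $A = \gamma_2(\Delta)$, extends to a topological decomposition $\widehat{\Delta}^{\mathrm{ab}} = \widehat\Z \oplus \overline{N/A}$. The closed subgroup $\overline{N/A}$ acts trivially on $W_m$ by continuity, as each generator $\bar a_i$ lies in $N = C_\Delta(V_m)$. On the other hand, Remark~\ref{Rem::ModuleDescription} expresses the $\bar\zeta$-action on $W_m$ as multiplication by $1+u$ in $(\widehat\Z[u]/u^{m-2})^n$ with $u = x-1$; a routine expansion of $(1+u)^{\widehat k}$ modulo $u^{m-2}$ shows that the induced map $\widehat\Z \to \operatorname{Aut}(W_m)$ is injective for $m \geq 4$. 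Hence the kernel of $\widehat{\Delta}^{\mathrm{ab}} \to \operatorname{Aut}(W_m)$ is precisely $\overline{N/A}$, whose preimage in $\widehat\Delta$ is $\overline N$ (since $\overline N \supseteq \gamma_2(\widehat\Delta)$).

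The main obstacle will be the computation in the third paragraph: determining the kernel of $\widehat{\Delta}^{\mathrm{ab}} \to \operatorname{Aut}(W_m)$, which relies crucially on the module description from Remark~\ref{Rem::ModuleDescription} and on the hypothesis $m \geq 4$ to ensure that the $\bar\zeta$-action on $W_m$ remains faithful.
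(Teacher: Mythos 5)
Your proposal is correct in outline, but it takes a genuinely different route from the paper. You work entirely inside the profinite completion: you characterise $\overline{N}=C_{\hD}\bigl(\gamma_2(\hD)/\gamma_m(\hD)\bigr)$ and $\overline{M}=C_{\hG}\bigl(\gamma_2(\hG)/\gamma_m(\hG)\bigr)$ for $m\geq 4$ and transport one centraliser to the other through $\phi$ (which preserves the closed lower central terms by Theorem~\ref{Thm::NS}), obtaining $\phi(\overline{N})=\overline{M}$ and hence the lemma by pulling back the relation $[\overline{M},\gamma_2(\hG)]=1$. The paper never establishes this identity at this stage; its proof of Lemma~\ref{centralizer} stays at the level of finite $\mathbb{F}_p[\Delta^{\mathrm{ab}}]$-module quotients $V=D/L$ of $D=[\Delta,\Delta]$, using Hall's theorem $\bigcap_p pD=0$ and residual finiteness of finitely generated modules to reduce to such quotients, and then combining the cyclicity constraint of Lemma~\ref{cyclic} with Lemma~\ref{nocyclic} and a primary decomposition with respect to the minimal polynomial of $\zeta$. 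What your approach buys is economy: it proves strictly more, since $\phi(\overline{N})=\overline{M}$ immediately gives that $N$ is abelian (so Lemma~\ref{Lem::Abelian} and essentially Lemma~\ref{Lem::ImageOfAbelianSubgroup}, with its Sylow-counting argument, become redundant), and it does not need Lemma~\ref{cyclic} or Lemma~\ref{nocyclic} at all. What it costs is two points you should make explicit. First, before invoking Remark~\ref{Rem::ModuleDescription} for $W_m=\gamma_2(\hD)/\gamma_m(\hD)$ you must identify $W_m$ with the profinite completion of $V_m=\gamma_2(\Delta)/\gamma_m(\Delta)$ compatibly with the $\zeta$-action; ``by continuity'' alone does not do this, but the computation in the proof of Proposition~\ref{Prop::ProfiniteInvarianceOfCentralSeries} (third isomorphism theorem, exactness of $\varprojlim$, and Segal's result that finitely generated nilpotent groups induce the full profinite topology on subgroups \cite{Segal1983}) does, giving $W_m\cong(\pZ[u]/u^{m-2})^n$ with $\zeta$ acting as multiplication by $1+u$. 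Second, the faithfulness of the $\pZ$-action is cleanest if you pass to the quotient modulo $u^2$ (equivalently take $m=4$), where $\widehat{k}$ acts as multiplication by $1+\widehat{k}u$, avoiding any discussion of profinite binomial coefficients; note also that from Proposition~\ref{nilq} you only need $\Delta/N\cong\Z$ and the inclusion $[N,\gamma_2(\Delta)]\leq\gamma_m(\Delta)$, the reverse containment $C_{\hD}(W_m)\leq\overline{N}$ coming from your kernel computation in $\hD^{\mathrm{ab}}=\pZ\bar{\zeta}\oplus\overline{N/\gamma_2(\Delta)}$. With these details supplied, your argument is a valid and shorter alternative to the paper's proof.
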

\begin{proof}
Let $D= \gamma_2(\Delta) = [\Delta, \Delta]$ which we consider as a module over the Noetherian domain $ \Z[\Delta^\mathrm{ab}]$ whereby $\Delta^\mathrm{ab}$ acts via conjugation in $\Delta$. By Proposition~\ref{nilq}, the quotient $\Delta / N$ is cyclic, so we may choose an element $\zeta \in \Delta$ such that $\langle N,\zeta\rangle = \Delta$.

As $\Delta / D \cong \Delta^\mathrm{ab} \cong \Gamma^\mathrm{ab} \cong \Z^n$ is nilpotent, a classic result of P. Hall implies \cite[Exercise 15.4.2]{Robinson1996} that $\bigcap_{p \textrm{ prime}}pD = 0$. Moreover, the $\Z[\Delta^\mathrm{ab}]$-module $D/pD$ is residually finite for each prime $p$ as $\Z[\Delta^\mathrm{ab}]$ is a finitely generated commutative ring \cite[Theorem 1]{Orzech1970}. Hence it will suffice to show that $N$ centralizes any finite $\mathbb F_p[\Delta^\mathrm{ab}]$-module of the form $V= D/L$ with $L \geq pD$. Indeed, choose any prime $p$ and let $V = D / L$ be a finite $\mathbb{F}_p[\Delta^\mathrm{ab}]$-module with $L \geq pD$. We shall write $\rho_V \colon \Delta \to \Delta^\mathrm{ab} \to \operatorname{GL}(V)$ for the conjugation action map.

\textbf{Claim:} For any $\nu \in N$, the order of $\rho_V(\nu) \in \operatorname{GL}(V)$ is a power of $p$. Indeed, suppose for a contradiction that there exists $\nu \in N$ such that the order of $\rho_V(\nu) \in \operatorname{GL}(V)$ is divisible by some prime $q \neq p$ with positive multiplicity. We may write $\operatorname{ord}(\rho_V(\zeta)) = q^lr$ for some $l \geq 0$ and $r \in \mathbb{N}$ coprime to $q$. The action of $\Delta^\mathrm{ab}$ fixes $\gamma_m(\Delta)$ for each $m \in \mathbb{N}$, so the subgroup $\gamma_m(\Delta) \trianglelefteq D$ forms a $\Z[\Delta^\mathrm{ab}]$-submodule. Consider hence the $\mathbb{Z}[\Delta^\mathrm{ab}]$-module \[
W=\frac{D}{qD+\gamma_{q^l+2}(\Delta)} \cong \frac{\mathbb{F}_q[x]}{(x - 1)^{q^l}}
\]
where the isomorphism derives from Remark~\ref{Rem::ModuleDescription}. Write $\rho_W \colon \Delta \to \Delta^\mathrm{ab} \to \operatorname{GL}(W)$ for the conjugation action map. By Proposition~\ref{nilq}, the element $\nu \in N$ acts trivially on $W$, so $\operatorname{ord}(\rho_W(\nu)) = 1$. On the other hand, $\zeta$ acts as multiplication by $x$, so $ \operatorname{ord}(\rho_W(\zeta)) = q^l$. Now $p$ and $q$ are coprime, so we obtain a direct sum decomposition
\[
T := \frac{D}{\left(qD+\gamma_{q^l+2}(\Delta)\right) \cap L} \cong V \oplus W
\]
as $\Z[x^\pm]$-modules. Writing $ A = \operatorname{GL}(W)$ and $B = \operatorname{GL}(V)$ as well as $a = \rho_W(\zeta^r)$ and $b = \rho_V(\zeta^r)$ and $b' = \rho_V(\nu)^{\operatorname{ord}(\rho_V(\nu))/q}$, we find that $\operatorname{ord}(a) = \operatorname{ord}(b) = q^l$ while $\operatorname{ord}(b') = q$. It follows via Lemma~\ref{nocyclic} that the images of $\zeta^r$ and $\nu^{\rho_V(\nu))/q}$ in $\operatorname{Aut}(T) \cong A \times B$ generate a subgroup of $\operatorname{Aut}(T)$ isomorphic to $C_{q^l} \times C_q$, a contradiction to Lemma~\ref{cyclic}. This proves the claim.

To complete the proof of Lemma~\ref{centralizer}, we want to show that $N$ centralises $V$. Indeed, suppose for a contradiction that there exists $\nu \in N$ acting non-trivially on $V$. By the claim, we have $\operatorname{ord}(\rho_V(\nu)) = p^k$ for some positive integer $k \geq 1$. Let $m(x) \in \mathbb F_p[x]$ be the minimal polynomial of $\zeta$ acting on $V$. We may factorise it as $m(x)=m_0(x) m_1(x)$ where $m_0(x)=(x-1)^d$ with $ d \geq 0$ and $m_1(1) \neq 0$, that is, $m_1(x)$ is coprime to $(x-1)$. Using the primary decomposition theorem, we obtain a decomposition of $\zeta$-invariant vector spaces \[V=V_0 \oplus V_1\] such that $m_i$ is the minimal polynomial of $\zeta$ on $V_i$ for $i = 0,1$. Since $N$ and $\zeta$ commute in their action on $V$ we deduce that each $V_i$ is in fact a submodule for $\Z[\Delta^\mathrm{ab}]$. By construction, there exists a power of $(x-1)$ that annihilates $V_0$. By the claim, $\rho_V(N)$ is a finite $p$-group, so its action on $V$ must also be unipotent. Since again the action of $\rho_V(N)$ and $\rho_V(\zeta)$ on $V$ commute, we can infer that the action of $\Delta^\mathrm{ab}$ on $V$ is also unipotent, i.e.
\[
\frak{I}^s V_0 = 0 
\] where $\frak{I}$ is the augmentation ideal of $\mathbb{F}_p[\Delta^\mathrm{ab}]$ and $s \in \mathbb{N}$ is a positive integer. In particular, $V_0$ is a homomorphic image of $D/\frak{I}^s D = \gamma_2(\Delta)/\gamma_{n+2}(\Delta)$ for some integer $s \geq 1$. It then follows via Proposition~\ref{nilq} that $\nu$ centralises $V_0$. Hence, it will suffice to show that $\nu$ centralises $V_1$, and we may assume without loss of generality that $V=V_1$ and the minimal polynomial $m(x)$ of $\zeta$ acting on $V$ is coprime to $(x-1)$.

As before, we may write $\operatorname{ord}(\rho_V(\zeta)) = p^l s$ for some integers $l,s \in \mathbb{N}$ with $s$ coprime to $p$. Again, $\gamma_m(\Delta)$ is fixed set-wise by the conjugation action of $\Delta^\mathrm{ab}$, so it forms a $\mathbb{Z}[x]$-submodule of $D$ and we may consider the $\mathbb{F}_p[\Delta^\mathrm{ab}]$-module
\[
W = \frac{D}{pD+\gamma_{p^l+2}(\Delta)}
\] which forms an $\mathbb{F}_p$-vector space of dimension $p^ln$. Note that the minimal polynomial of $\zeta$ on $W$ is $(x-1)^{p^l}$, as per Remark~\ref{Rem::ModuleDescription}. It follows that the minimal polynomial of $\zeta$ on $W$ is coprime to $m(x)$, and the primary decomposition theorem yields an isomorphism
\[
S:=\frac{D}{\left((pD+\gamma_{p^l+2}(\Delta)\right) \cap L} \cong W \oplus V
\] as $\Z[x]$-modules. Writing $ A = \operatorname{GL}(W)$ and $B = \operatorname{GL}(V)$ as well as $a = \rho_W(\zeta^r)$ and $b = \rho_V(\zeta^r)$ and $b' = \rho_V(\nu)$, we find that $\operatorname{ord}(a) = \operatorname{ord}(b) = p^l$ while $\operatorname{ord}(b') = p^k$. Moreover, the element $\nu \in N$ acts trivially on $W$ by Proposition~\ref{nilq}. It follows via Lemma~\ref{nocyclic} that the images of $\zeta^r$ and $\nu$ in $\operatorname{Aut}(S) \cong A \times B$ generate a subgroup of $\operatorname{Aut}(T)$ isomorphic to $C_{p^l} \times C_{p^k}$, a contradiction to Lemma~\ref{cyclic}. We conclude that any $\nu \in N$ must centralise $V$ and the proof is complete.
\end{proof}

\begin{lemma}\label{Lem::Abelian} The group $N$ is abelian.
\end{lemma}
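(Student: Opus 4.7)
The plan is to show that the closure $\overline{N}$ of $N$ in $\hD$ is abelian; since $\Delta$ is residually finite and $N$ embeds into $\overline{N}$, this will force $N$ itself to be abelian.

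By Lemma~\ref{centralizer}, $N$ centralises $\gamma_2(\Delta)=[\Delta,\Delta]$. Combined with the identity $\overline{\gamma_2(\Delta)} = \gamma_2(\hD)$ established in the proof of Proposition~\ref{Prop::ProfiniteInvarianceOfCentralSeries} and the continuity of commutation in $\hD$, this yields $\overline{N} \subseteq C_{\hD}(\gamma_2(\hD))$. The profinite isomorphism $\phi : \hD \xrightarrow{\sim} \hG$ preserves the characteristic subgroup $\gamma_2$, so $\phi(\overline{N}) \subseteq C_{\hG}(\gamma_2(\hG))$.

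The heart of the proof is then the identification $C_{\hG}(\gamma_2(\hG)) = \overline{M}$, where $\overline{M}$ is the closure of $M$ in $\hG$. The inclusion $\overline{M} \subseteq C_{\hG}(\gamma_2(\hG))$ is immediate: $\overline{M}$ is abelian (as the closure of the abelian group $M$) and contains $\gamma_2(\hG) = (t-1)\overline{M}$. For the reverse, I would exploit the profinite semidirect decomposition $\hG = \overline{M} \rtimes \pZ$ (coming from $\Gamma = M \rtimes \langle t \rangle$) to write any $g \in C_{\hG}(\gamma_2(\hG))$ as $g = m t^a$ with $m \in \overline{M}$ and $a \in \pZ$. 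Since $m$ lies in the abelian group $\overline{M}$, a direct calculation in the semidirect product reduces the centralising condition to the module identity $(t^a - 1)(t-1)\overline{M} = 0$. To rule out $a \neq 0$, I would pick a prime $p$ for which the $p$-adic component $a_p \in \Z_p$ is nonzero and project onto a finite quotient of $\hG$ of wreath-product type $((\Z/p^k)[u]/u^l)^n \rtimes C_N$ (with $u = t-1$ and $k, l, N$ chosen so that $a \bmod p^k$ is nonzero and $(1+u)^N = 1$ holds in the coefficient ring). A direct expansion gives $(t^a - 1)(t-1) \equiv a u^2 \pmod{u^3}$ in the coefficient ring; hence $(t^a-1)(t-1)$ acts nontrivially on the image of a standard basis vector of $M$, contradicting the module identity and forcing $a = 0$.

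With the identification $C_{\hG}(\gamma_2(\hG)) = \overline{M}$ in hand, $\phi(\overline{N}) \subseteq \overline{M}$ is abelian, whence $\overline{N}$, and thus $N$, is abelian. The main obstacle I anticipate is the centraliser identification itself; the subtle point is to construct a finite quotient of $\Gamma$ that simultaneously detects the relevant $p$-component of $a$ in its coefficient ring and retains enough of the $u^2$ monomial that the nonzero coefficient is not absorbed by the group-ring relation $(1+u)^N = 1$.
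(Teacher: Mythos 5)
Your argument is correct, but it takes a genuinely different route from the paper's. The paper stays entirely at the level of finite quotients: it exhibits the cofinal family $\Gamma(m)=C_m^n\wr C_m$ among the finite quotients of $\Gamma$ (hence of $\Delta$), notes via Lemma~\ref{centralizer} that the image of $N$ in each $\Gamma(m)$ lies in $C_{\Gamma(m)}([\Gamma(m),\Gamma(m)])$, and shows by the computation $(x^k-1)(x-1)\neq 0$ in $(\Z/m\Z)[x]/(x^m-1)$ that this centraliser is exactly the abelian base $R_m^n$; residual finiteness of $\Delta$ then finishes. You instead work at the top: using $\overline{\gamma_2(\Delta)}=\gamma_2(\hD)$ (Theorem~\ref{Thm::NS}) and the decomposition $\hG=\overline{M}\rtimes\overline{\langle t\rangle}$ with $\overline{\langle t\rangle}\cong\pZ$, you identify $C_{\hG}(\gamma_2(\hG))=\overline{M}$, with finite quotients of the shape $\bigl((\Z/p^k)[u]/(u^3)\bigr)^n\rtimes C_{p^j}$ entering only to rule out a nontrivial procyclic exponent $a\in\pZ$ via the non-vanishing $((1+u)^{a}-1)u\equiv a u^2\not\equiv 0$. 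Both proofs rest on Lemma~\ref{centralizer} plus the principle that the centraliser of the derived subgroup in a wreath-like object is the abelian base, so the essential computation is shared; what your route buys is the stronger intermediate statement $\phi(\overline{N})\leq C_{\hG}(\gamma_2(\hG))=\overline{M}$, an inclusion the paper only obtains later (Lemma~\ref{Lem::ImageOfAbelianSubgroup}) by a separate Sylow-order bounding argument, so your approach would also shortcut part of Section~5; what the paper's route buys is elementarity, since it needs neither the profinite semidirect decomposition of $\hG$ nor any handling of exponents in $\pZ$, only cofinality of the $\Gamma(m)$ and residual finiteness. The details you flag do check out: choosing $p$ with nonzero $p$-component of $a$, then $k$ with $a\bmod p^k\neq 0$ and $j$ large enough that $(1+u)^{p^j}=1$ in $(\Z/p^k)[u]/(u^3)$, gives a legitimate finite quotient of $\Gamma$ (of the same kind as the quotients $Q_p$ used in Section~3), and the identification $\hG=\overline{M}\rtimes\overline{\langle t\rangle}$ follows from compactness of $\overline{M}\cdot\overline{\langle t\rangle}$ together with the Hopficity of $\pZ$.
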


\begin{proof} We commence by exhibiting a cofinal family of finite quotients of the torsion-free lamplighter group $\Gamma = \Z^n \wr \Z$. Consider the semidirect product decomposition of $\Gamma$ in (\ref{Eq::DefOfLL}) and fix generators $a_1, \ldots, a_n$ for the free $\Z[\langle t \rangle]$-module $M$, so that $\Gamma$ is as a group by $a_1, \ldots, a_n$ and $t$. Given a positive integer $m \in \mathbb{N}$, define
\[
\Gamma(m) = C_m^n \wr C_m \cong R_m^n \rtimes C_m
\] where $R_m$ is the additive abelian group of the ring
\[
R_m = \frac{(\Z/m\Z)[x]}{(x^m - 1)}
\]
and the generator $\tau_m$ of $C_m$ acts on each copy of $R_m$ as multiplication by $x$. The natural maps $M \twoheadrightarrow R_m$ and $\langle t \rangle \twoheadrightarrow C_m$ yield an epimorphism $\pi_m \colon \Gamma \twoheadrightarrow C_m$. In fact, given any finite-index normal subgroup $U \trianglelefteq \Gamma$, setting $m = 2\cdot [\Gamma : U]$ and $V = \langle \langle t^m,a_1^m,\ldots, a_n^m\rangle \rangle \trianglelefteq \Gamma$ yields $V \leq U$ and $\Gamma / V \cong \Gamma(m)$. Hence the set $\{\Gamma(m) : m \in \mathbb{N}\}$ is a cofinal subsystem of the inverse system of finite quotients $\mathcal{C}(\Gamma) = \mathcal{C}(\Delta)$. To prove that $N$ is abelian, it thus suffices to show that $\pi_m(N) \leq \Gamma(m)$ is abelian for $m > 2$.
Moreover, Lemma~\ref{centralizer} yields $\pi_m(N) \leq C_{\Gamma(m)}([\Gamma(m), \Gamma(m)])$, so in fact it will suffice to show that the centraliser $Z(m) = C_{\Gamma(m)}([\Gamma(m), \Gamma(m)]) \trianglelefteq \Gamma(m)$ is abelian for each positive integer $m > 2$.

Indeed, we shall demonstrate that $Z(m) = R_m^n$, which is abelian by construction. On the one hand, we have $[\Gamma(m), \Gamma(m)] = (x-1) R_m^n$ which is centralised by its abelian supergroup $R_m^n$, so $R_m^n \leq Z(m)$. Conversely, suppose for a contradiction that there exists $z \in Z(m) \backslash R_m^n$. We may assume without loss of generality that $z=\tau^k$ for some $k \in \{1, \ldots, m-1\}$. The condition that $z$ centralizes $[\Gamma(m),\Gamma(m)]$ is equivalent to $\tau^k-1$ annihilating $(x-1)R_m^n$, which in turn is equivalent to the equation $(x^k-1)(x-1)=0$ in $R_m$. Now, if $k \neq 1$ then
\[
(x^k-1)(x-1)=x^{k+1}-x^k-x+1 \neq 0
\] because $x \not \in \{1,x^k,x^{k+1}\}$ and $1,x, \ldots x^{m-1}$ are a basis of $R_m$ as a free $\Z/m\Z$-module. On the other hand, if $k = 1$ then \[
(x^k-1)(x-1)=x^2 -2x+1 \not =0
\]
because $m>2$. This contradicts $z \in Z(m)$, so we conclude that $Z(m) = R_m^n$ is abelian and the proof is complete.
\end{proof}

The conjunction of Lemma~\ref{Lem::Abelian} and Lemma~\ref{centralizer} yields Proposition~\ref{Prop::LiftingCentralisers}.

\section{The Proof of Theorem~\ref{Thm::MainThm}} \label{Sec::Final}

In this section, we combine Proposition~\ref{Prop::LiftingCentralisers} and Theorem~\ref{Thm::NoetherianDomains} to prove Theorem~\ref{Thm::MainThm}.

\MainThm*

Indeed, $\Delta$ be a finitely generated residually finite group with admitting an isomorphism of profinite completions $\phi \colon \hD \to \hG$. By Proposition~\ref{Prop::LiftingCentralisers}, there exists a normal abelian subgroup $N \trianglelefteq \Delta$ such that $N \leq C_\Delta([\Delta,\Delta])$ and $\Delta / N \cong \Z$ is an infinite cyclic group generated by the image of some $\zeta \in \Delta$. The abelian subgroup $N$ then acquires the structure of a module over the ring $\Lambda = \Z[x^\pm]$ where $x$ acts as conjugation by $\zeta$ within the supergroup $\Delta$. Likewise, the decomposition (\ref{Eq::DefOfLL}) gives $\Gamma \cong M \rtimes \Z$ where $M$ is a free $\Lambda$-module of rank $n$ and $x$ acts as conjugation by $t$ within $\Gamma$. We commence with the following lemma.
\begin{lemma}\label{Lem::ImageOfAbelianSubgroup}
    Let $\overline{N}$ and $\overline{M}$ denote the closures of $N$ and $M$ within the profinite completions $\hD$ and $\hG$, respectively. The isomorphism of profinite groups $\phi \colon \hD \to \hG$ must satisfy $\phi(\overline{N}) = \overline{M}$.
\end{lemma}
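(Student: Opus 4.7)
The plan is to first show $\phi(\overline{N}) \leq \overline{M}$ by a centraliser argument, then promote this inclusion to equality using the Hopf property of $\pZ$. The starting observation is that $\Gamma$ is metabelian, so $M = C_\Gamma([\Gamma, \Gamma])$, while $N$ likewise centralises $[\Delta, \Delta]$ by Proposition~\ref{Prop::LiftingCentralisers}. Thus both $\overline{N}$ and $\overline{M}$ are naturally captured by centralisers of commutator subgroups in the respective profinite completions, which is a property manifestly preserved by the isomorphism $\phi$.

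First I would show $\overline{N} \leq C_{\hD}([\hD, \hD])$. By Theorem~\ref{Thm::NS} the abstract commutator $[\hD, \hD]$ is closed in $\hD$ and coincides with $\overline{[\Delta, \Delta]}$. Since the commutator map is continuous in any topological group, the centraliser of a closed subgroup is itself closed, so the containment $N \leq C_\Delta([\Delta,\Delta])$ passes to closures and yields $\overline{N} \leq C_{\hD}([\hD, \hD])$. Applying the isomorphism $\phi$ then gives $\phi(\overline{N}) \leq C_{\hG}([\hG, \hG])$.

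Next I would show $C_{\hG}([\hG, \hG]) \leq \overline{M}$, which I expect to be the main obstacle. I would exploit the cofinal family of finite quotients $\pi_m \colon \Gamma \twoheadrightarrow \Gamma(m)$ introduced in the proof of Lemma~\ref{Lem::Abelian}, where the explicit calculation establishes $C_{\Gamma(m)}([\Gamma(m), \Gamma(m)]) = R_m^n = \pi_m(M)$ for every $m > 2$. Extended continuously to $\hG$, each $\pi_m$ sends $C_{\hG}([\hG, \hG])$ into $C_{\Gamma(m)}([\Gamma(m), \Gamma(m)]) = \pi_m(M)$, and hence $C_{\hG}([\hG, \hG]) \subseteq \pi_m^{-1}(\pi_m(M)) = \overline{M} \cdot \ker(\pi_m)$. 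Since the open kernels $\{\ker \pi_m\}_{m > 2}$ form a fundamental system of neighbourhoods of $1$ in $\hG$ and $\overline{M}$ is closed, intersecting over $m$ gives $C_{\hG}([\hG, \hG]) \leq \overline{M}$, so combining with the previous step yields $\phi(\overline{N}) \leq \overline{M}$. An alternative route through the module structure of $\overline{M}$ over the completed group ring $\widehat{\Z[x^\pm]}$ would also work, but routing through the quotients $\Gamma(m)$ reuses a computation already in the paper.

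Finally, I would upgrade this inclusion to equality by passing to quotients. Because $\Z$ is residually finite, the short exact sequences $1 \to N \to \Delta \to \Z \to 1$ and $1 \to M \to \Gamma \to \Z \to 1$ induce exact sequences
\[
1 \to \overline{N} \to \hD \to \pZ \to 1, \qquad 1 \to \overline{M} \to \hG \to \pZ \to 1
\]
of profinite completions. The inclusion $\phi(\overline{N}) \leq \overline{M}$ then produces a continuous surjection $\pZ \cong \hG/\phi(\overline{N}) \twoheadrightarrow \hG/\overline{M} \cong \pZ$, which is necessarily an isomorphism because $\pZ$ is Hopfian as a topological group: any continuous surjective endomorphism is multiplication by a unit. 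Hence $\phi(\overline{N}) = \overline{M}$.
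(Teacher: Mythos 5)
Your proposal is correct, but it proves the key inclusion by a genuinely different route from the paper. The paper establishes $\phi(\overline{N}) \leq \overline{M}$ by contradiction: if $\overline{M} \lneqq \phi(\overline{N})\cdot\overline{M}$, the image in $\hG/\overline{M} \cong \pZ$ is a nontrivial subgroup of finite $p$-power index in some Sylow subgroup, and since $\phi(\overline{N})\cdot\overline{M}$ centralises $[\hG,\hG]$, the $p$-part of $Q/C_Q([Q,Q])$ would be bounded over all finite quotients $Q$ of $\Gamma$ --- contradicting the quotients $\mathbb{F}_p[C_{p^l}] \rtimes C_{p^l}$. You instead prove the stronger and cleaner statement $C_{\hG}([\hG,\hG]) \leq \overline{M}$ directly, by pushing into the cofinal quotients $\Gamma(m)$ and reusing the computation $C_{\Gamma(m)}([\Gamma(m),\Gamma(m)]) = R_m^n = \pi_m(M)$ from the proof of Lemma~\ref{Lem::Abelian}, then intersecting the open sets $\overline{M}\cdot\ker\hat{\pi}_m$; combined with $\overline{N} \leq C_{\hD}([\hD,\hD])$ (which does require the two-step continuity argument and $\overline{[\Delta,\Delta]} = [\hD,\hD]$ via Theorem~\ref{Thm::NS}, as you note) this yields the inclusion. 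Your approach buys a sharper conclusion --- $\phi$ carries the full centraliser $C_{\hD}([\hD,\hD])$ into $\overline{M}$ --- and avoids the Sylow bookkeeping, at the cost of importing the explicit cofinal family; the paper's argument needs only the single family of quotients $\mathbb{F}_p[C_{p^l}]\rtimes C_{p^l}$ and no congruence-type computation at this point. For the upgrade to equality both arguments are essentially the same (Hopficity of the finitely generated profinite group $\pZ$); you bypass the five lemma by quotienting directly, which is fine. One small correction: the right-exactness of $1 \to \overline{N} \to \hD \to \pZ \to 1$ has nothing to do with residual finiteness of $\Z$; it is the standard fact that $\hD/\overline{N} \cong \widehat{\Delta/N}$ for a normal subgroup $N$ (the paper cites \cite[Proposition 3.2.5]{Ribes2010}), and it is exactly because you work with the closures $\overline{N}, \overline{M}$ rather than the completions $\hN, \hM$ that no separability hypothesis is needed.
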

\begin{proof}
    To begin, we shall demonstrate that the inclusion $\phi(\overline{N}) \leq \overline{M}$ holds. Assume for a contradiction that this is not the case, so $\overline{M} \lneqq \phi(\overline{N}) \cdot \overline{M}$ holds. By \cite[Proposition 3.2.5]{Ribes2010}, there is an isomorphism of profinite groups
    \[
    G := \hG / \overline{M} \cong \widehat{\Gamma / M} \cong \pZ 
    \] and $G$ has a nontrivial subgroup $H := \phi(\overline{N}) \cdot \overline{M} / \overline{M}$. By \cite[Theorem 2.7.2]{Ribes2010}, there exists a prime number $p$ and a natural number $s \in \mathbb{N}$ such that the $p$-Sylow subgroup $H_p \trianglelefteq H$ is non-trivial and forms a subgroup of the $p$-Sylow subgroup $G_p \trianglelefteq G$ with index $[G_p : H_p] = p^s$. Let $\widetilde{H_p}$ be the preimage of $H_p$ in $\hG$. Now Proposition~\ref{Prop::LiftingCentralisers} tells us that the subgroup $N$ centralises $[\Delta,\Delta]$ in $\Delta$, so the image $\phi(\overline{N})$ must centralise the commutator $\phi(\overline{[\Delta,\Delta]}) = [\hG,\hG]$ in the profinite group $\hG$. Similarly, $M \leq C_\Gamma([\Gamma, \Gamma])$ and thus $\overline M \leq C_{\hG}([\hG, \hG])$ holds. It follows that $\phi(\overline{N}) \cdot \overline{M}$ centralises the commutator $[\hG, \hG]$, a property which passes to its subgroup $\widetilde{H_p}$. Thus, for any finite quotient $\Gamma \twoheadrightarrow Q$, the composition
    \[
    \varpi \colon \Gamma \twoheadrightarrow Q \twoheadrightarrow Q / C_Q([Q,Q]) =: T
    \]
    yields an epimorphism of $p$-Sylow subgroups $\varpi(G_p / H_p) \twoheadrightarrow T_p$. In particular, there exists a prime number $p$ such that $|T_p| \leq [G_p : H_p] = p^s$, i.e. the order of $p$-Sylow subgroups is bounded among the set of groups $Q / C_Q([Q,Q])$ where $Q$ ranges through the finite quotients of $\Gamma$. This contradicts the existence of the quotient $\Gamma \twoheadrightarrow \mathbb{F}_p[C_{p^l}] \rtimes C_{p^l} =: Q_{p,l}$ for any prime $p$ and any $l \in \mathbb{N}$.
    
    We infer that the inclusion $\phi(\overline{N}) \leq \overline{M}$ holds. Thus we obtain a commutative diagram with exact rows
    \[
    \begin{tikzcd}
1 \arrow[r] & \overline{N} \arrow[d] \arrow[r] & \widehat{\Delta} \arrow[d, "\phi"] \arrow[r] & \widehat{\Delta / N} \arrow[d, "\widetilde{\phi}"] \arrow[r] & 1 \\
1 \arrow[r] & \overline{M} \arrow[r]           & \widehat{\Gamma} \arrow[r]                   & \widehat{\Gamma / M} \arrow[r]           & 1
\end{tikzcd}
    \]
    via \cite[Proposition 3.2.5]{Ribes2010}. The induced map $\widetilde{\phi}$ is a surjection by the five lemma. But Proposition~\ref{Prop::LiftingCentralisers} yields $\widehat{\Delta / N} \cong \widehat\Z \cong \widehat{\Gamma / M}$ and finitely generated profinite groups are Hopfian \cite[Lemma 2.5.2]{Ribes2010}, so $\widetilde{\phi}$ must in fact be an isomorphism. Another application of the five lemma yields $\phi(\overline{N}) = \overline{M}$, as postulated.
\end{proof}

Moreover, the closures $\overline{M}$ and $\overline{N}$ of $M$ and $N$ within the respective profinite completions $\hG$ and $\hD$ acquire the structure of profinite $\widehat{\Lambda}$-modules via conjugacy within the ambient profinite groups. By \cite[Lemma 5.2]{Wykowski2025_2}, these $\widehat{\Lambda}$-module structures on $\overline{N}$ and $\overline{M}$ are isomorphic to the $\Lambda$-profinite completions of the $\Lambda$-modules $M$ and $N$, which we shall denote as $\hM$ and $\hN$, respectively. It then follows via Lemma~\ref{Lem::ImageOfAbelianSubgroup} that there is a commutative diagram
\[
\begin{tikzcd}
1 \arrow[r] & \overline{N} \cong \hN \arrow[d, "f"] \arrow[r] & \widehat{\Delta} \arrow[d, "\phi"] \arrow[r] & \widehat{\Delta / N} \cong \pZ \arrow[d, "\widetilde{\phi}"] \arrow[r] & 1 \\
1 \arrow[r] & \overline{M} \cong \hM \arrow[r]           & \widehat{\Gamma} \arrow[r]                   & \widehat{\Gamma / M} \cong \pZ \arrow[r]           & 1
\end{tikzcd}
\]
where $\widetilde{\phi}$ is the induced automorphism of the profinite group $\pZ$, and $f$ is the isomorphism of profinite abelian groups obtained by restricting $f = \phi \at{\overline{N}} : \overline{N} \to \overline{M}$. It follows from the commutativity of this diagram that $f$ forms a twisted isomorphism of profinite $\widehat{\Lambda}$-modules, in the sense that
\begin{equation}
    f(\lambda \cdot a) = \widetilde{\phi}(\lambda) \cdot f(a)
\end{equation}
holds for all $a \in \hN$ and $\lambda \in \widehat{\Lambda}$. Consider now the composition of isomorphisms of profinite abelian groups given by 
\[
F \colon \widehat{\Lambda}^n \xrightarrow{\,(\widetilde{\phi}^{-1})^n\,} \widehat{\Lambda}^n \xrightarrow{\quad \sim\quad} \hM \xrightarrow{\quad f \quad} \hN
\]
where the central map arises as the completion of a choice of isomorphism witnessing that the $\Lambda$-module $M$ is free of rank $n$. Observe that for any $\lambda \in \widehat{\Lambda}$ and $\vec{\omega} \in \widehat{\Lambda}^n$, we have
\begin{align*}
    F(\lambda \cdot \vec{\omega}) &= f \left((\widetilde\phi^{-1})^n(\lambda \cdot \vec{\omega}) \right) \\
    &= f \left(\widetilde{\phi}^{-1}(\lambda)\cdot (\widetilde{\phi}^{-1})^n(\vec{\omega}) \right) \\
    &= \widetilde{\phi} \left(\widetilde{\phi}^{-1}(\lambda)\right) \cdot f \left((\widetilde{\phi}^{-1})^n(\vec{\omega}) \right) \\
    &= \lambda \cdot F(\vec{\omega})
\end{align*}
so in fact $F$ forms an isomorphism of profinite $\widehat\Lambda$-modules. It follows that the $\Lambda$-module $N$ is profinitely isomorphic to the free module of rank $n$. But free $\Lambda$-modules are $\Lambda$-profinitely rigid by the conjunction of Theorem~\ref{Thm::NoetherianDomains} and Theorem~\ref{Thm::Quillen-Suslin}. Thus we conclude that $M \cong \Lambda^n$ and there is an isomorphism
\[
\Delta \cong N \rtimes_x \Z \cong \Lambda^n \rtimes_x \Z \cong M \rtimes_x \Z \cong \Gamma
\]
as discrete groups. We have demonstrated that $\Gamma$ is profinitely rigid in the absolute sense, and the proof of Theorem~\ref{Thm::MainThm} is complete.

\printbibliography

\end{document}